\newcommand{\be}{\begin{equation}}
\newcommand{\ee}{\end{equation}}
\newtheorem{teo}{Theorem}[section]
\newtheorem{lema}{Lemma}[section]
\newtheorem{prop}{Proposition}[section]
\newtheorem{defi}{Definition}[section]
\newtheorem{obs}{Remark}[section]
\newtheorem{coro}{Corollary}[section]
\newcommand{\R}{{\mathbb R}}
\numberwithin{equation}{section}
\numberwithin{figure}{section}
\newtheorem{theorem}{Theorem}[section]
\newtheorem{proposition}[theorem]{Proposition}
\begin{document}
\vglue-1cm \hskip1cm
\title[Periodic Waves for a Dispersive Equation]{Orbital Stability of Periodic Traveling-Wave Solutions for a Dispersive Equation}

\begin{center}

\subjclass[2000]{76B25, 35Q51, 35Q53.}

\keywords{Orbital stability, dispersive equation, periodic waves}

\maketitle

{\bf F\'abio Natali}

{Departamento de Matem\'atica - Universidade Estadual de Maring\'a\\
Avenida Colombo, 5790, CEP 87020-900, Maring\'a, PR, Brazil.}\\
{ fmnatali@hotmail.com}

\end{center}

\begin{abstract}
In this paper we establish the
orbital stability of periodic traveling waves for a general class of
dispersive equations. We use the Implicit Function Theorem to
guarantee the existence of smooth solutions depending of the
corresponding wave speed. Essentially, our method establishes that if the linearized operator
has only one negative eigenvalue which is simple and zero is a simple eigenvalue the orbital stability is determined provided that
a convenient condition about the average of the wave is satisfied. We use our approach to prove the orbital stability of periodic dnoidal waves associated with the Kawahara equation.

\end{abstract}

\section{Introduction.}

The existence of solutions that
maintains its shape while it travels at constant speed is one of the most fascinating  phenomena determined by
dispersive equations. These special solutions (in general, called traveling waves) arise because of the perfect balance between the nonlinear and dispersive
effects in the medium. In current literature, the existence of these
solutions appear in several applications as fluid dynamics, nonlinear
optics, hydrodynamic and many other fields. Thus, it is important to establish a qualitative study of the dynamic related to these special solutions.\\
\indent The goal in this paper is to present sufficient conditions for the orbital stability of periodic
traveling wave solutions related to the following general dispersive model,
\begin{equation}
u_t+uu_x-(\mathcal{M}u)_x=0, \label{equakawa}
\end{equation}
where $u:\mathbb{R}\times\mathbb{R}\rightarrow\mathbb{R}$ is a real
$L-$periodic function and $\mathcal{M}$ is a differential or
pseudo-differential operator in periodic setting and it is defined as a Fourier multiplier operator by
\begin{equation}\label{symbol}
\widehat{\mathcal{M}g}(\kappa)=\theta
(\kappa)\widehat{g}(\kappa),\;\;\;\kappa\in \mathbb Z,
\end{equation}
where the symbol $\theta$ of $\mathcal{M}$ is assumed to be a
mensurable, locally bounded function on $\mathbb R$, satisfying
\begin{equation}\label{alpha}
 A_1|\kappa|^{m_2}\leq \theta(\kappa)\leq A_2 |\kappa|^{m_2},\ \ \ \ \ m_2>0,
\end{equation} for all
$\kappa\in \mathbb Z$ and for some $A_i>0$, $i=1,2$. Hypothesis $(\ref{alpha})$ is necessary to study qualitative aspects of the model $(\ref{equakawa})$ (for instance, global well-posedness and stability) in the respective energy space associated, namely, $H_{per}^{\frac{m_2}{2}}([0,L])$. Now, since $\theta(0)=0$ one has that $\mathcal{M}$ satisfies
\begin{equation}\label{cond}\mathcal{M}(a+u)=\mathcal{M}u\ \ \ \ \ \ \ \mbox{and}\ \ \ \ \
\ \ \int_0^L(\mathcal{M}u)dx=0,\ \ \ \ \ \forall\
a\in\mathbb{R}.\end{equation}

In equation $(\ref{equakawa})$, we consider traveling wave
solutions of the form $u(x,t)=\psi(x-\omega t)$, where $\omega\in
I\subset \mathbb R$ and $\psi:\mathbb R\to \mathbb R$ is a smooth
function. So, if we substitute this form  into $(\ref{equakawa})$,
we obtain after integration
\begin{equation}\label{soltrav11}
-\omega\psi_{(\omega,A)}+\frac{1}{2}\psi_{(\omega,A)}^{2}-\mathcal{M}\psi_{(\omega,A)}+A=0,
\end{equation}
where $A$ is a constant of integration not necessarily  zero. A
crucial role in our stability analysis is given by the symmetries of the model
(\ref{equakawa}) in $\mathbb R$, namely,
\begin{enumerate}
\item {\it translation invariance}: $u(x,t)\to u(x+y,t), \;y\in \mathbb R$;

\item  {\it Galilean invariance}: $u(x,t)\to a+u(x,t),\;a\in \mathbb R$.
\end{enumerate}
So, if one considers  the first  condition in (\ref{cond}) and the
Galilean invariance, we may assume   $A\equiv0$ in (\ref{soltrav11}) for a specific value of parameter $a$. In addition, the Galilean invariance can be also used to construct positive, negative and sign-changed periodic solutions by taking a convenient value of $a$.\\
\indent Particular cases of the operator $\mathcal{M}$ and the respective result of orbital stability of periodic waves have been obtained by an extensive number of contributors. For instance, if one considers
$\mathcal{M}=-\partial_x^2$  (the Korteweg-de Vries equation) we can cite
\cite{AN} \cite{ABS}, \cite{Haragus},
\cite{johnson}, and for $\mathcal{M}=\mathcal{H}\partial_x$ (the
Benjamin-Ono equation), where $\mathcal{H}$ indicates the Hilbert
transform in periodic context, the first result of orbital stability of periodic waves was treated in \cite{AN}. When $\mathcal{M}$ represents a fractionary derivative as $\mathcal{M}=(\sqrt{-\partial_x^2})^{\alpha}$, $0<\alpha\leq2$, in the Fourier sense (which includes the cases $\mathcal{M}=-\partial_x^2$ and $\mathcal{M}=\mathcal{H}\partial_x$), we have the work \cite{hur} where the authors assumed the existence of minimizers for the energy functional associated and proving the stability of periodic waves provided the number of negatives eigenvalues is one or two (to obtain the spectral property, they have used the approach in \cite{FL}).\\
\indent Next, we shall give a brief outline of our work. In fact, let us consider the linearized operator around the wave $\psi_{(\omega,A)}$
\begin{equation}\label{operator}
\mathcal{L}=\mathcal{M}+\omega-\psi_{(\omega,A)}.
\end{equation}
Operator $\mathcal{L}$ in $(\ref{operator})$ is a closed, unbounded,
self-adjoint operator on $L_{per}^2([0,L_0])$ whose spectrum consists in an
enumerable (infinite) set of eigenvalues. Thus, by assuming that $\mathcal{L}_{0}:=\mathcal{L}\big|_{(\omega,A)=(\omega_0,A_0)}$ has only one negative eigenvalue which is simple and zero is a simple eigenvalue whose associated eigenfunction is $\frac{d}{dx}\psi$ (as required in \cite{be}, \cite{bona1}, \cite{grillakis1} and \cite{weinstein1}), we are enable to establish the orbital stability of the periodic wave $\psi$ provided that the average of the wave satisfies $\frac{1}{L_0}\int_0^{L_0}\psi(x)dx>\omega_0$. Our approach will based on a combination of techniques determined by \cite{bona1}, \cite{grillakis1}, \cite{johnson} and \cite{weinstein1} where the construction of a smooth surface
$$
(\omega,A)\in \mathcal{O}\mapsto\psi_{(\omega,A)}\in H_{per,e}^n([0,L_0]),\ \
n\in \mathbb N,
$$
of periodic waves which solves equation $(\ref{soltrav11})$ is relevant in our analysis. Thus, in order to summarize our main assumption, we highlight it as follows \\

\begin{itemize}
\item[$(H)$] Let $(\omega_0,A_0)\in\mathbb R_{+}\backslash\{0\}\times\mathbb{R}$ be fixed. Suppose that
$\psi:=\psi_{(\omega_0,A_0)}\in C_{per}^{\infty}([0,L_0])$ is a positive even periodic
traveling wave solution for the equation $(\ref{soltrav11})$ with fixed period $L_0>0$.
Moreover, the self-adjoint operator
$\mathcal{L}_{0}$
 has only one negative eigenvalue which is
simple and zero is a simple eigenvalue
whose eigenfunction is $\frac{d}{dx}\psi$.
\end{itemize}
\vspace{0.2cm}

\indent As an application of our work, we present the result of orbital stability of periodic traveling waves for the Kawahara equation
\begin{equation}\label{equakawa1}
u_t+uu_x+u_{xxx}-u_{xxxxx}=0,
\end{equation}
that is, $\mathcal{M}=\partial_x^4-\partial_x^2$ in equation
$(\ref{equakawa})$. The existence of explicit solutions is determined by using exhaustive numerical computations. In \cite{parkes}, the authors put forward an explicit periodic wave having a \textit{dnoidal} profile as
\begin{equation}\label{dnprof}\begin{array}{lll}
\psi(x)=a &+& b\left(\mbox{dn}^2\left(\frac{2K}{L}x,k\right)-\frac{E}{K}\right)\\\\
&+& d\left(\mbox{dn}^4\left(\frac{2K}{L}x,k\right)-(2-k^2)\frac{2E}{3K}+\frac{1-k^2}{3}\right),\end{array}
\end{equation}
where \textit{dn} is the Jacobi elliptic function called dnoidal, $k\in(0,1)$ is the modulus, $K=K(k)$ indicates the complete integral elliptic of first kind and parameters $a,\ b$ and $d$ depend smoothly on the modulus $k\in(0,1)$. Regarding the stability, in \cite{haragus1} the authors showed the linear stability of periodic waves (that is, the spectrum of the linearization about these waves is contained in the imaginary axis) related to the equation \eqref{equakawa1}. They established the periodic travelling waves with speed $\omega$ are spectrally stable provided that the amplitude $a$ of the wave satisfies $a= o(|\omega|^{5/4})$. In \cite{ncp} it was determined a local proof for the orbital stability of periodic waves having the form $(\ref{dnprof})$ by using the arguments in \cite{andrade}. Our goal is to determine a more complete scenario for the stability of periodic waves.\\
\indent Our paper is organized as follows. Section 2
is devoted to present the stability of periodic waves associated
with the general equation $(\ref{equakawa})$.
In Section 3 we present the application of the results in
previous section.

\section{Stability of Periodic Waves}

\indent Before starting, we need to guarantee the existence of a smooth surface of periodic waves having fixed period. We see that assumption $(H)$ is sufficient for our purpose.

\begin{teo}\label{teoexist} Let us suppose that assumption $(H)$ holds. There is a smooth surface of positive even periodic solutions for $(\ref{soltrav11})$ and an open subset $\mathcal{O}\subset\mathbb R_{+}\backslash\{0\}\times\mathbb{R}$, containing $(\omega_0,A_0)$, such that
$$
(\omega,A)\in \mathcal{O}\mapsto\psi_{(\omega,A)}\in H_{per,e}^n([0,L_0]),\ \
n\in \mathbb N,
$$
all of them with the same minimal period $L_0>0$.
\end{teo}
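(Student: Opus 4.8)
The plan is to exhibit the wave as a zero of a smooth nonlinear map whose partial derivative is precisely the linearized operator $\mathcal{L}_0$, and then to invoke the Implicit Function Theorem. Fix an integer $n$ with $n>\max\{m_2,1/2\}$ and define
\begin{equation*}
F:\mathbb{R}_{+}\backslash\{0\}\times\mathbb{R}\times H_{per,e}^{n}([0,L_0])\longrightarrow H_{per,e}^{n-m_2}([0,L_0]),\qquad F(\omega,A,v)=\mathcal{M}v+\omega v-\tfrac{1}{2}v^{2}-A.
\end{equation*}
By \eqref{alpha} the operator $\mathcal{M}$ maps $H_{per}^{n}$ into $H_{per}^{n-m_2}$, while the Sobolev embedding $H_{per}^{n}\hookrightarrow L^{\infty}$ and the algebra property of $H_{per}^{n}$ guarantee that $v\mapsto v^{2}$ is smooth into $H_{per}^{n}\subset H_{per}^{n-m_2}$; hence $F$ is a smooth map between Banach spaces, and by \eqref{soltrav11} one has $F(\omega_0,A_0,\psi)=0$. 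A direct computation shows that the Fr\'echet derivative of $F$ with respect to its third variable at the base point is
\begin{equation*}
D_vF(\omega_0,A_0,\psi)h=\mathcal{M}h+\omega_0 h-\psi h=\mathcal{L}_0 h,
\end{equation*}
which is exactly the operator \eqref{operator} evaluated at $(\omega_0,A_0)$.

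The main obstacle is that $\mathcal{L}_0$ fails to be invertible on the full periodic space: by assumption $(H)$ one has $\ker\mathcal{L}_0=\mathrm{span}\{\tfrac{d}{dx}\psi\}$, so the Implicit Function Theorem cannot be applied directly. The resolution is to work on the even sector. Since $\psi$ is even, the eigenfunction $\tfrac{d}{dx}\psi$ is odd, and since $\mathcal{M}$ commutes with the reflection $x\mapsto-x$ (its symbol $\theta$ being even), the operator $\mathcal{L}_0$ leaves the even subspace invariant and restricts to a self-adjoint operator on $L_{per,e}^{2}([0,L_0])$ with compact resolvent (because $\mathcal{M}$ is of positive order and multiplication by the smooth bounded function $\psi$ is a relatively compact perturbation). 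Its spectrum is therefore discrete, and its kernel in the even sector is trivial, for the only kernel element $\tfrac{d}{dx}\psi$ is odd. Hence $0\notin\sigma(\mathcal{L}_0|_{e})$ and, combining this with the ellipticity lower bound in \eqref{alpha}, $\mathcal{L}_0:H_{per,e}^{n}([0,L_0])\to H_{per,e}^{n-m_2}([0,L_0])$ is a topological isomorphism.

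With $D_vF(\omega_0,A_0,\psi)=\mathcal{L}_0$ invertible on the even spaces, the Implicit Function Theorem provides an open set $\mathcal{O}\subset\mathbb{R}_{+}\backslash\{0\}\times\mathbb{R}$ containing $(\omega_0,A_0)$ and a smooth map $(\omega,A)\in\mathcal{O}\mapsto\psi_{(\omega,A)}\in H_{per,e}^{n}([0,L_0])$ solving \eqref{soltrav11}, with $\psi_{(\omega_0,A_0)}=\psi$. It remains to upgrade this local surface to the stated properties. For the regularity, rewrite the equation as $\mathcal{M}\psi_{(\omega,A)}=\tfrac{1}{2}\psi_{(\omega,A)}^{2}-\omega\psi_{(\omega,A)}+A$; the lower bound $A_1|\kappa|^{m_2}\leq\theta(\kappa)$ lets the right-hand side feed a standard bootstrap, so that $\psi_{(\omega,A)}\in H_{per,e}^{n}([0,L_0])$ for every $n\in\mathbb{N}$. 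For positivity, taking $n$ large gives continuity of $(\omega,A)\mapsto\psi_{(\omega,A)}$ into $C^{0}$; since $\psi>0$ on the compact interval $[0,L_0]$, shrinking $\mathcal{O}$ keeps $\psi_{(\omega,A)}>0$. Finally, every $\psi_{(\omega,A)}$ is $L_0$-periodic by construction, and because $\psi$ has $L_0$ as its minimal period only finitely many of its Fourier modes are needed to obstruct invariance under each sub-period $L_0/d$, $d\geq 2$; continuity of those finitely many modes on $\mathcal{O}$ then forbids any smaller minimal period, so $L_0$ remains the minimal period throughout $\mathcal{O}$. I expect the identification of $D_vF$ with $\mathcal{L}_0$ together with the passage to the even sector (which removes the odd kernel $\tfrac{d}{dx}\psi$) to be the conceptual heart of the argument, the isomorphism property being the genuine technical point and the remaining items being routine.
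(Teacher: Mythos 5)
Your proposal is correct and follows essentially the same route as the paper: both set up the traveling-wave equation as the zero set of a smooth map on the even periodic Sobolev spaces, identify the Fréchet derivative with $\mathcal{L}_0$, and observe that restricting to the even sector removes the odd kernel element $\frac{d}{dx}\psi$, so that self-adjointness plus compact resolvent yields invertibility and the Implicit Function Theorem applies. The only differences are cosmetic (you work between $H_{per,e}^{n}$ and $H_{per,e}^{n-m_2}$ rather than viewing the derivative as an unbounded operator on $L_{per,e}^2$ with domain $H_{per,e}^{m_2}$), and you supply the bootstrap, positivity, and minimal-period details that the paper leaves implicit.
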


\begin{proof}
We define for $s\geq 0$,  $X_{e}^s=\{f\in H_{per}^s([0,L]): f
\;\text{is even}\}$. Let $\Pi:R_{+}\backslash\{0\}\times\mathbb{R}\times X_{e}^{m_2}\rightarrow X_{e}^0$ be the map defined
by
\begin{equation}\label{operaA}
\Pi(\omega,A,\psi)=
\mathcal{M}\psi+\omega\psi-\frac{1}{2}\psi^2+A.
\end{equation}
Function $\Pi$ is smooth in all variables and from assumption $(H)$ one has
$\Upsilon(\omega_0,A_0,\psi)=0$. Next, the
Fr\'echet derivative associated with the function $\Pi$ with
respect to $\psi$ evaluated at the point
$(\omega_0,A_0,\psi)$ becomes an operator
$\mathcal{G}$ given by

\begin{equation}\label{operaG}
\mathcal{G}=\mathcal{M}+\omega_0-\psi
\end{equation}
Now, let us consider $f\in \ker(\mathcal{G})$, where
$\mathcal{G}$ is defined on $X_{e}^0$ with domain
$D(\mathcal{G})= X_{e}^{m_2}$. So, we have
$$
\mathcal{M}f+\omega_0f-\psi f=0
$$
Then $\frac{d}{dx}\psi$ is an eigenfunction of the operator
$\mathcal{G}:=\mathcal{M}+\omega_0-\psi$
(as an operator defined in $X^0$ with domain $X^{m_2}$) whose
eigenvalue is $\lambda=0$. Moreover, since $\frac{d}{dx}\psi$
is odd and it does not belong to $X_{e}^{m_2}$, we see that $\mathcal{G}$ is one to one. Now, let us prove
that, with domain $X_{e}^{m_2}$, $\mathcal{G}$ is also surjective.
Indeed, $\mathcal{G}$ is clearly a self-adjoint operator. Thus
$\sigma(\mathcal{G})=\sigma_{disc}(\mathcal{G})\cup
\sigma_{ess}(\mathcal{G})$. Since $X_{e}^{m_2}$ is compactly
embedded in $X_{e}^0$, the operator $\mathcal{G}$ has compact
resolvent. Consequently, $\sigma_{ess}(\mathcal{G})=\emptyset$ and
$\sigma(\mathcal{G})=\sigma_{disc}(\mathcal{G})$ consists of
isolated eigenvalues with finite algebraic multiplicities (see
\cite{kato1}). Now, since $\mathcal{G}$ is one-to-one, it follows
that 0 is not an eigenvalue of $\mathcal{G}$, and so it does not
belong to $\sigma(\mathcal{G})$. This means that $0\in
\rho(\mathcal{G})$, where $\rho(\mathcal{G})$  denotes the resolvent
set of $\mathcal{G}$, and so, by definition, $\mathcal{G}$ is
surjective. The arguments above imply  that $\mathcal{G}^{-1}$
exists and, moreover, is a bounded linear operator. Consequently,
since $\Pi$ and $\Pi_{\psi}$ are clearly smooth maps on
their domains, from the Implicit Function Theorem we establish the
results enunciated above.
\end{proof}
 \indent Next result establishes the behaviour of the first eigenvalues associated with the linearized operator $\mathcal{L}_0$ in $(\ref{operator})$.

\begin{prop}\label{teo5}
Suppose that assumption $(H)$ holds and let $\psi_{(\omega,A)}$ be the periodic
traveling wave solution obtained in Theorem $\ref{teoexist}$. There exists an open neighbourhood $\widetilde{\mathcal{O}}\subset\mathcal{O}$ containing $(\omega_0,A_0)$ such that the linearized operator
$\mathcal{L}=\mathcal{M}+\omega-\psi_{(\omega,A)}$, $(\omega,A)\in\widetilde{\mathcal{O}}$,
has only one negative eigenvalue which is simple and zero is a
simple eigenvalue whose eigenfunction is $\frac{d}{dx}\psi_{(\omega,A)}$.
\end{prop}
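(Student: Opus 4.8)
The plan is to treat $\mathcal{L}_{(\omega,A)}=\mathcal{M}+\omega-\psi_{(\omega,A)}$ as a continuous (indeed smooth) perturbation of $\mathcal{L}_0$ and to transport the spectral information contained in $(H)$ to nearby parameters by a continuity-of-eigenvalues argument. First I would record that each operator in the family is self-adjoint on $L^2_{per}([0,L_0])$ with domain $H^{m_2}_{per}([0,L_0])$, and, exactly as in the proof of Theorem \ref{teoexist}, the compact embedding of the domain into the base space makes the resolvent compact. Consequently the spectrum of $\mathcal{L}_{(\omega,A)}$ is purely discrete, real, bounded below, and accumulates only at $+\infty$, so I may list its eigenvalues in increasing order counting multiplicity as $\mu_0(\omega,A)\le\mu_1(\omega,A)\le\cdots$. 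Assumption $(H)$ says precisely that at $(\omega_0,A_0)$ one has $\mu_0(\omega_0,A_0)<0$, $\mu_1(\omega_0,A_0)=0$, and $\mu_2(\omega_0,A_0)>0$, the three being strictly separated.

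Second, I would observe that $0$ is an eigenvalue of $\mathcal{L}_{(\omega,A)}$ for every $(\omega,A)\in\mathcal{O}$. Differentiating the profile equation $(\ref{soltrav11})$ in $x$ yields $-\omega\,\partial_x\psi_{(\omega,A)}+\psi_{(\omega,A)}\partial_x\psi_{(\omega,A)}-\mathcal{M}\partial_x\psi_{(\omega,A)}=0$, that is $\mathcal{L}_{(\omega,A)}\bigl(\tfrac{d}{dx}\psi_{(\omega,A)}\bigr)=0$. Hence the zero eigenvalue, with eigenfunction $\frac{d}{dx}\psi_{(\omega,A)}$, never disappears under the deformation; the real content of the proposition is that this eigenvalue neither splits nor permits a second negative eigenvalue to appear.

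Third comes the quantitative continuity. Since $\mathcal{L}_{(\omega,A)}-\mathcal{L}_0=(\omega-\omega_0)I-(\psi_{(\omega,A)}-\psi)$ is a bounded multiplication-plus-constant operator, I would bound its operator norm on $L^2_{per}([0,L_0])$ by $|\omega-\omega_0|+\|\psi_{(\omega,A)}-\psi\|_{L^\infty}$, which tends to $0$ as $(\omega,A)\to(\omega_0,A_0)$ thanks to the smoothness of the surface in Theorem \ref{teoexist} and the embedding $H^n_{per}([0,L_0])\hookrightarrow L^\infty$. Feeding this into the min-max characterization of the eigenvalues gives $|\mu_n(\omega,A)-\mu_n(\omega_0,A_0)|\le\|\mathcal{L}_{(\omega,A)}-\mathcal{L}_0\|$ for every $n$. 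Fixing $d=\tfrac12\min\{-\mu_0(\omega_0,A_0),\,\mu_2(\omega_0,A_0)\}>0$ and shrinking $\mathcal{O}$ so that this norm stays below $d$, I obtain $\mu_0(\omega,A)<0$, $\mu_1(\omega,A)\in(-d,d)$, and $\mu_n(\omega,A)>0$ for all $n\ge2$ and all $(\omega,A)\in\mathcal{O}$.

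Finally I would combine the two ingredients. The eigenvalue $0$ produced in the second step cannot coincide with $\mu_0(\omega,A)<0$ nor with any $\mu_n(\omega,A)>0$, $n\ge2$, so it must equal $\mu_1(\omega,A)$; the strict separation $\mu_0<0<\mu_2$ then forces $\mu_1(\omega,A)=0$ to be simple and $\mu_0(\omega,A)$ to be the unique negative eigenvalue, itself simple, which is exactly the assertion. The delicate point is not the continuity per se but guaranteeing that no eigenvalue migrates across $0$ and that the kernel does not enlarge; both are controlled by the a priori presence of the zero eigenvalue together with the uniform gap estimate, so the whole argument reduces to choosing $\mathcal{O}$ small enough for the min-max bound to preserve the signs.
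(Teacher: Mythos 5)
Your argument is correct, and it reaches the conclusion by a route that differs in technique from the paper's. The paper bounds the Kato \emph{gap} metric $\widehat{\delta}(\mathcal{L}_{(\omega_0,A_0)},\mathcal{L})$ by a multiple of $|\omega-\omega_0|+\|\psi_{(\omega,A)}-\psi_{(\omega_0,A_0)}\|_{L^\infty}$ and then invokes Kato's abstract theorem on stability of isolated eigenvalues (Theorem 3.16, Chap.\ IV) to transfer the spectral picture from $\mathcal{L}_0$ to $\mathcal{L}$. You instead exploit that all the operators share the domain $H^{m_2}_{per}([0,L_0])$ and differ by a \emph{bounded} multiplication-plus-constant perturbation, so Weyl's min--max inequality $|\mu_n(\omega,A)-\mu_n(\omega_0,A_0)|\le\|\mathcal{L}_{(\omega,A)}-\mathcal{L}_0\|$ applies directly; shrinking $\mathcal{O}$ below half the spectral gap pins $\mu_0<0$, $\mu_1$ near $0$, and $\mu_n>0$ for $n\ge2$. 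Your version is more elementary and self-contained, and it also supplies a step the paper leaves implicit: continuity alone only shows the middle eigenvalue stays \emph{near} zero, whereas differentiating the profile equation $(\ref{soltrav11})$ in $x$ shows $\mathcal{L}_{(\omega,A)}\bigl(\tfrac{d}{dx}\psi_{(\omega,A)}\bigr)=0$ exactly, which forces $\mu_1(\omega,A)=0$ and identifies the kernel as the span of $\tfrac{d}{dx}\psi_{(\omega,A)}$ once simplicity is known. What the paper's gap-metric approach buys in exchange is generality --- it would survive perturbations that are unbounded or change the domain --- but that generality is not needed here. The only point worth making explicit in your write-up is that $\tfrac{d}{dx}\psi_{(\omega,A)}\not\equiv 0$ (true since $\psi_{(\omega,A)}$ is a nonconstant periodic wave), so that the kernel element you produce is genuinely an eigenfunction.
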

\begin{proof}
Indeed, from Theorem $\ref{teoexist}$ let us consider $\mathcal{O}$ the 
open neighbourhood containing $(\omega_0,A_0)$. Choose a convenient open neighbouhood $\widetilde{\mathcal{O}}\subset\mathcal{O}$ containing $(\omega_0,A_0)$ (for instance, an open ball centered at $(\omega_0,A_0)$ with  sufficiently small radius). The family of self-adjoint
operators $\mathcal{L}=\mathcal{M}+\omega-\psi_{(\omega,A)}$ is defined on
$L_{per}^2([0,L])$ with domain
$D(\mathcal{L})=H_{per}^{m_2}([0,L])$. In what follows, we consider the
metric {\it gap}, $\widehat{\delta}(T,S)$, between the closed
operators $T$ and $S$ (see Chap. IV in \cite {kato1}). From
Theorem 2.17 and Theorem 2.14 in Chap. IV of \cite {kato1},
\begin{equation}
\begin{aligned}
\widehat{\delta}(\mathcal{L}_{(\omega_0,A_0)},\mathcal{L})&\leq
2(1+\|\psi_{(\omega,A)}\|_{L^\infty}^2)\widehat{\delta}
(\mathcal{L}_{(\omega_0,A_0)}+\psi_{(\omega,A)},\mathcal{M} +{\omega})\\
&\leq 2(1+\|\psi_{(\omega,A)}\|_{L^\infty}^2)
[|\omega_0-\omega|+\|\psi_{(\omega,A)}-\psi_{(\omega_0,A_0)}\|_{L^\infty}].
\end{aligned}
\end{equation}
Therefore we obtain
$\widehat{\delta}(\mathcal{L}_{(\omega_0,A_0)},\mathcal{L})\to 0$ as $(\omega,A)\to
(\omega_0,A_0)$, and so from \cite[Theorem 3.16, Chap. IV]{kato1}) the isolated
eigenvalues of $\mathcal{L}_{(\omega_0,A_0)}$ are stable. Hence, for $(\omega,A)\in\widetilde{\mathcal{O}}$, we obtain that
$\mathcal{L}$  has the same spectral properties of
$\mathcal{L}_{(\omega_0,A_0)}$.
\end{proof}

\indent Next, we present our stability result by adapting the arguments in \cite{bona2}, \cite{grillakis1}, \cite{johnson} and \cite{weinstein1}. So, in what follows, we assume that the model in $(\ref{equakawa})$ possesses a convenient global well-posedness result in the space $H_{per}^{s}([0,L_0])$,
for $s\geq \frac{m_2}{2}$. In addition, we need to suppose the existence of the following conserved quantities
\begin{equation}\label{conser1}
E(u)=\frac{1}{2}\int_0^{L_0}(\mathcal{M}^{1/2}u)^2-\frac{1}{3}u^3dx,
\end{equation}
\begin{equation}\label{conser2}
F(u)=\frac{1}{2}\int_0^{L_0}u^2dx,
\end{equation}
and
\begin{equation}\label{conser3}
M(u)=\int_0^{L_0}udx,
\end{equation}
where in the quantity $(\ref{conser1})$ we are using that operator $\mathcal{M}$ is $m-accretive$ (see \cite[pg. 281]{kato1}). This fact allows us to conclude the existence of a self-adjoint linear operator $\mathcal{M}^{1/2}$ such that $(\mathcal{M}^{1/2})^2=\mathcal{M}$.\\
\indent Assume that assumption $(H)$ holds. From Theorem $\ref{teoexist}$ we are enabled to consider
$$
\eta:=\frac{\partial}{\partial\omega}\psi_{(\omega,A)}\Big|_{(\omega,A)=(\omega_0,A_0)},\ \qquad\beta:=\frac{\partial}{\partial A}\psi_{(\omega,A)}\Big|_{(\omega,A)=(\omega_0,A_0)}.
$$
Define
 $$
 M(\psi)=\int_0^{L_0}\psi_{(\omega,A)}(x)dx\Big|_{(\omega,A)=(\omega_0,A_0)},\qquad F(\psi)=\frac{1}{2}\int_0^{L_0}\psi_{(\omega,A)}^2(x)dx\Big|_{(\omega,A)=(\omega_0,A_0)},$$
$$ M_{\omega}(\psi)=\frac{\partial}{\partial\omega}\int_0^{L_0}\psi_{(\omega,A)}(x)dx\Big|_{(\omega,A)=(\omega_0,A_0)},\qquad  M_{A}(\psi)=\frac{\partial}{\partial A}\int_0^{L_0}\psi_{(\omega,A)}(x)dx\Big|_{(\omega,A)=(\omega_0,A_0)},
 $$
 and
 $$
 F_{\omega}(\psi)=\frac{1}{2}\frac{\partial}{\partial{\omega}}\int_0^{L_0}\psi_{(\omega,A)}^2(x)dx\Big|_{(\omega,A)=(\omega_0,A_0)}, \qquad F_{A}(\psi)=\frac{1}{2}\frac{\partial}{\partial{A}}\int_0^{L_0}\psi_{(\omega,A)}^2(x)dx\Big|_{(\omega,A)=(\omega_0,A_0)}.
 $$
In order to simplify the notation, the  norm and inner product in $L_{per}^2([0,L_0])$ will be denoted by  $||\cdot||$ and $\langle\cdot,\cdot\rangle$.

\indent Now, we need some preliminaries notations. Let $\rho$ be the semi-distance defined on the energy space $X^{\frac{m_2}{2}}=H_{per}^{\frac{m_2}{2}}([0,L_0])$ as
\be\label{rho}
\rho(u,\psi)=\inf_{y\in\mathbb{R}}||u(\cdot+y)-\psi||_{X^{\frac{m_2}{2}}}.
\ee
For a given $\varepsilon>0$, we define the $\varepsilon$-neighborhood of the orbit $O_\psi$ as
\be \label{tube}U_{\varepsilon} := \{u\in X^{\frac{m_2}{2}};\ \rho(u,\psi) < \varepsilon\}.\ee
We also introduce the smooth manifolds
\be\label{manifold}
\Sigma_0=\{u\in X^{\frac{m_2}{2}};\ F(u)=F(\psi),\ M(u)=M(\psi)\},
\ee
and
\be\label{tau0}
\Upsilon_0=\{u\in X^{\frac{m_2}{2}};\ \langle \psi,u\rangle=\langle 1,u\rangle=0\}.
\ee
\indent Our notion of orbital stability is finally presented.
\begin{defi}\label{defi1}
We say that $\psi$ is orbitally stable with respect to (\ref{equakawa}) if, for all $\varepsilon>0$, there exists $\delta>0$ such that if $\|u_0-\psi\|_{X^{\frac{m_2}{2}}}<\delta$ and $u(t)$ is the solution of (\ref{equakawa}) with $u(0)=u_0$, then
$$\rho(u(t),\psi)<\varepsilon, \quad \mbox{for all} \, \,  t\in\R.$$
\end{defi}
The next result state that under a suitable restriction, the operator $\mathcal{L}$ is strictly positive.

\begin{proposition}\label{prop2}
Suppose that assumption $(H)$ holds. Assume that there is $\Phi\in X^{m_2}$ such that $\langle\mathcal{L}_0\Phi,\varphi\rangle=0$, for all $\varphi\in \Upsilon_0$, and
\be\label{defiI}
\mathcal{I}:=\langle\mathcal{L}_0\Phi,\Phi\rangle<0
\ee
Then, there is a constant $c>0$ such that
$$\langle\mathcal{L}_0v,v\rangle\geq c||v||_{X^{\frac{m_2}{2}}}^2,$$
for all $v\in \Upsilon_0$ such that $\langle v,\psi'\rangle=0$.
\end{proposition}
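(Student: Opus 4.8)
The plan is to use the spectral data in assumption $(H)$ together with the single test direction $\Phi$ to count the negative directions of the quadratic form $Q(v):=\langle\mathcal{L}_0 v,v\rangle$, with associated bilinear form $Q(u,v)=\langle\mathcal{L}_0 u,v\rangle$. Write $-\lambda_0<0$ for the unique (simple) negative eigenvalue of $\mathcal{L}_0$, with normalized eigenfunction $\chi_0$, and recall that $0$ is simple with eigenfunction $\psi'=\frac{d}{dx}\psi$; since $\mathcal{L}_0$ has compact resolvent (Proposition \ref{teo5}), the rest of the spectrum is bounded below by the first positive eigenvalue $\lambda_1>0$, and $\langle\chi_0,\psi'\rangle=0$ as these are eigenfunctions for distinct eigenvalues. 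Set $N:=\{v\in\Upsilon_0:\ \langle v,\psi'\rangle=0\}$. The observation driving everything is that, by hypothesis, $\mathcal{L}_0\Phi$ is orthogonal to $\Upsilon_0$, so $\langle\mathcal{L}_0\Phi,v\rangle=0$ for every $v\in N\subset\Upsilon_0$; that is, $\Phi$ is $Q$-orthogonal to all of $N$.

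First I would show $Q\geq0$ on $N$. If some $v_*\in N$ had $Q(v_*)<0$, then $v_*$ and $\Phi$ would be linearly independent, for $v_*=t\Phi$ with $t\neq0$ would force $\Phi\in N$ and hence $\mathcal{I}=\langle\mathcal{L}_0\Phi,\Phi\rangle=0$, contradicting $(\ref{defiI})$. Using $Q(v_*,\Phi)=0$, the form $Q$ on $\mathrm{span}\{v_*,\Phi\}$ would then have matrix $\mathrm{diag}(Q(v_*),\mathcal{I})$ with both entries negative, producing a two-dimensional subspace on which $\mathcal{L}_0$ is negative definite. This contradicts that $\mathcal{L}_0$ has exactly one negative eigenvalue, so $Q(v)\geq0$ for all $v\in N$.

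The crux is to promote this to strict positivity. I would set $\nu:=\inf\{Q(v):v\in N,\ \|v\|=1\}\geq0$ and argue $\nu>0$ by contradiction. The G\aa rding-type bound coming from $(\ref{alpha})$ makes a minimizing sequence bounded in $X^{\frac{m_2}{2}}$, so by the compact embedding $X^{\frac{m_2}{2}}\hookrightarrow L^2_{per}$ and weak lower semicontinuity of $Q$ a minimizer $v_0\in N$ with $\|v_0\|=1$ and $Q(v_0)=\nu$ exists. If $\nu=0$, decompose $v_0=p\chi_0+w$ with $w\perp\chi_0$; since $v_0\perp\psi'$ and $\langle\chi_0,\psi'\rangle=0$ we also get $w\perp\psi'$, whence $\langle\mathcal{L}_0 w,w\rangle\geq\lambda_1\|w\|^2$. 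Then $0=Q(v_0)=-\lambda_0 p^2+\langle\mathcal{L}_0 w,w\rangle$ forces $p\neq0$ (otherwise $w=v_0$ with $\|w\|=1$ gives $0\geq\lambda_1>0$), so $\gamma:=\langle\mathcal{L}_0 v_0,\chi_0\rangle=-\lambda_0 p\neq0$. On $\mathrm{span}\{v_0,\chi_0,\Phi\}$, which is genuinely three-dimensional (if $v_0=t\Phi$ then $Q(v_0)=t^2\mathcal{I}\neq0$, and $\chi_0\in\mathrm{span}\{v_0,\Phi\}$ would force $\gamma=Q(v_0,\chi_0)=0$), the Gram matrix of $Q$ is
\[
G=\begin{pmatrix} 0&\gamma&0\\ \gamma&-\lambda_0&\delta\\ 0&\delta&\mathcal{I}\end{pmatrix},\qquad \delta:=\langle\mathcal{L}_0\chi_0,\Phi\rangle,
\]
with $\det G=-\gamma^2\mathcal{I}>0$ and $\mathrm{tr}\,G=-\lambda_0+\mathcal{I}<0$. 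A symmetric matrix with positive determinant and negative trace has exactly two negative eigenvalues, yielding again two independent negative directions for $\mathcal{L}_0$ and the desired contradiction. Hence $\nu>0$ and $Q(v)\geq\nu\|v\|^2$ for all $v\in N$.

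Finally I would upgrade the $L^2$ coercivity to the energy norm. Writing $Q(v)=\|\mathcal{M}^{1/2}v\|^2+\omega\|v\|^2-\int_0^{L_0}\psi v^2\,dx$ and using $(\ref{alpha})$ in the form $\|\mathcal{M}^{1/2}v\|^2\geq A_1\|v\|_{\dot{H}^{m_2/2}}^2$ gives $Q(v)\geq A_1\|v\|_{\dot{H}^{m_2/2}}^2-\|\psi\|_{L^\infty}\|v\|^2$; taking a convex combination $Q=\theta Q+(1-\theta)Q$ with $\theta$ close to $1$ absorbs the lower-order term and produces a constant $c>0$ with $Q(v)\geq c\|v\|_{X^{\frac{m_2}{2}}}^2$ on $N$. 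The main obstacle is precisely the strict positivity step: the two-dimensional count only delivers $Q\geq0$, and the degenerate case $\nu=0$ must be ruled out by the more delicate three-dimensional determinant and trace computation above, which is exactly where the strict inequality $\mathcal{I}<0$ (rather than $\mathcal{I}\leq0$) is essential.
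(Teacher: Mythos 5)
Your proof is correct, but it closes the argument by a genuinely different mechanism than the paper. The paper (following Benjamin and Bona, and deferring the final step to Lemma 5.1 of Bona--Souganidis--Strauss) works in explicit spectral coordinates $L^2_{per}=[\chi]\oplus[\psi']\oplus P$: it expands $\Phi=a_0\chi+b_0\psi'+p_0$ and a test function $\varphi=a_1\chi+p_1$, extracts the two relations $\langle\mathcal{L}_0p_0,p_0\rangle<a_0^2\lambda_0^2$ and $a_0a_1\lambda_0^2=\langle\mathcal{L}_0p_0,p_1\rangle$, and then concludes via the generalized Cauchy--Schwarz inequality for the positive form on $P$, which gives $a_1^2\lambda_0^2<\langle\mathcal{L}_0p_1,p_1\rangle$ and hence $\langle\mathcal{L}_0\varphi,\varphi\rangle>0$, with a separate compactness step for the uniform constant. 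You instead count negative directions: the $2\times 2$ Gram matrix $\mathrm{diag}(Q(v_*),\mathcal{I})$ excludes $Q<0$ on $N$, and the $3\times 3$ Gram matrix on $\mathrm{span}\{v_0,\chi_0,\Phi\}$, with $\det G=-\gamma^2\mathcal{I}>0$ and $\mathrm{tr}\,G=-\lambda_0+\mathcal{I}<0$, has exactly two negative eigenvalues, so Sylvester's inertia together with the min--max principle contradicts the fact that $\mathcal{L}_0$ has a single simple negative eigenvalue. This is the index-counting formalism in the spirit of Grillakis--Shatah--Strauss rather than the Benjamin--Bona coordinate computation. Your route is self-contained (no appeal to an external lemma), obtains the coercivity constant directly through an attained minimizer, and sidesteps the case analysis on whether $a_0=0$ that is implicit in the paper's coordinates; the paper's route gives more explicit quantitative control and a direct link to the classical references. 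Two points you should state explicitly if you write this up: the inertia of the Gram matrix of $Q$ on a finite-dimensional subspace of the form domain bounds the negative index of $\mathcal{L}_0$ from below via Courant--Fischer (valid here because $\mathcal{L}_0$ is self-adjoint with compact resolvent), and the minimizer $v_0$ is only known to lie in the form domain $X^{\frac{m_2}{2}}$, so the pairings $Q(v_0,\chi_0)$ and $Q(v_0,\Phi)$ must be interpreted in the form sense, using that $\chi_0$ and $\Phi$ belong to $X^{m_2}$. Both are routine but worth a sentence each.
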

\begin{proof}
We shall give only a sketch of the proof. From assumption $(H)$ one has
\be\label{decomp}L_{per}^2([0,L_0])=[\chi]\oplus [\psi']\oplus P,\ee
where $\chi$ satisfies $||\chi||=1$ and $\mathcal{L}_0\chi=-\lambda_0^2\chi$, $\lambda_0\neq0$.  By using the arguments  in \cite[page 278]{kato1}, we obtain that $$\langle \mathcal{L}_0p,p\rangle\geq c_1||p||^2,\ \ \ \ \ \mbox{for all}\ p\in D(\mathcal{L})\cap P,$$
where $c_1$ is a positive constant.

Next, from $(\ref{decomp})$, we write
$$\
\Phi=a_0\chi+b_0\psi'+p_0,\ \ \ \ \ a_0,b_0\in\mathbb{R},
$$
where $p_0\in D(\mathcal{L}_0)\cap P$. Now, since $\psi'\in \ker (\mathcal{L}_0)$, $\mathcal{L}_0\chi=-\lambda_0^2\chi$, and $\mathcal{I}<0$, we obtain
\be
\langle \mathcal{L}_0 p_0,p_0\rangle=\langle\mathcal{L}_0(\Phi-a_0\chi-b_0\psi'),\Phi-a_0\chi-b_0\psi'\rangle
=\langle\mathcal{L}_0\Phi,\Phi\rangle+a_0^2\lambda_0^2<a_0^2\lambda_0^2.
\ee
\indent Taking $\varphi\in \Upsilon_0$ such that $||\varphi||=1$ and $\langle \varphi,\psi'\rangle=0$, we can write $\varphi=a_1\chi+p_1$, where $p_1\in X^{\frac{m_2}{2}}\cap P$. Thus,
\begin{equation}
0=\langle\mathcal{L}_0\Phi,\varphi\rangle=\langle -a_0\lambda_0^2\chi +\mathcal{L}_0p_0,a_1\chi+p_1\rangle
=-a_0a_1\lambda_0^2+\langle\mathcal{L}_0p_0,p_1\rangle.
\end{equation}
The rest of the proof runs as in \cite[Lemma 5.1]{bona2} (see also \cite[Lemma 4.4]{johnson}).

\end{proof}

 Proposition \ref{prop2} is useful to establish  the following result.

\begin{proposition}\label{prop3}
Let $E$ be the conserved quantity defined in $(\ref{conser1})$. Under the assumptions of Proposition $\ref{prop2}$ there are $\alpha>0$ and $C=C(\alpha)>0$ such that
$$E(u)-E(\psi)\geq C\rho(u,\psi)^2,$$
for all $u\in U_{\alpha}\cap\Sigma_0$.
\end{proposition}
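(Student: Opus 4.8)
The plan is to run the classical Lyapunov/coercivity argument, adapted to the two constraints defining $\Sigma_0$. Introduce the augmented functional $G:=E+\omega_0 F-A_0 M$. By the traveling-wave equation $(\ref{soltrav11})$ the profile $\psi$ is a critical point of $G$, that is $G'(\psi)=0$, and computing the second variation gives $G''(\psi)=\mathcal{L}_0$. Since $E$, $F$ and $M$ are translation invariant and since $F(u)=F(\psi)$, $M(u)=M(\psi)$ on $\Sigma_0$, for every $u\in\Sigma_0$ one has
\be
E(u)-E(\psi)=G(u)-G(\psi),
\ee
so it suffices to bound $G(u)-G(\psi)$ from below.

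First I would remove the translation degeneracy by modulation. By a standard application of the Implicit Function Theorem on the tube $U_\alpha$ (start from a near-minimizer of $z\mapsto\|u(\cdot+z)-\psi\|_{X^{\frac{m_2}{2}}}$ and correct it), for $\alpha$ small enough there is $y=y(u)$ such that $v:=u(\cdot+y)-\psi$ satisfies $\langle v,\psi'\rangle=0$ with $\|v\|_{X^{\frac{m_2}{2}}}\to0$ as $\rho(u,\psi)\to0$; moreover $\rho(u,\psi)\le\|v\|_{X^{\frac{m_2}{2}}}$ because $\rho$ is an infimum over translates. Using translation invariance once more, $G(u)=G(\psi+v)$, so a Taylor expansion together with $G'(\psi)=0$ and $G''(\psi)=\mathcal{L}_0$ gives
\be
E(u)-E(\psi)=\tfrac12\langle\mathcal{L}_0 v,v\rangle+R,\qquad |R|\le C\|v\|_{X^{\frac{m_2}{2}}}^3,
\ee
where $R$ is the purely cubic contribution of the nonlinearity, estimated through the embedding $X^{\frac{m_2}{2}}\hookrightarrow L^3_{per}([0,L_0])$.

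The heart of the matter is to turn the constraints into the orthogonality hypotheses of Proposition $\ref{prop2}$. The constraint $M(u)=M(\psi)$ yields $\langle 1,v\rangle=0$ exactly, while $F(u)=F(\psi)$ yields only $\langle\psi,v\rangle=-\tfrac12\|v\|^2$, an error of size $O(\|v\|_{X^{\frac{m_2}{2}}}^2)$ that prevents $v$ from lying in $\Upsilon_0$. To absorb it, set $g:=\psi-\bar\psi$ with $\bar\psi:=\frac1{L_0}\int_0^{L_0}\psi\,dx$, so that $\langle 1,g\rangle=0$ and $\langle g,\psi'\rangle=0$, and put $w:=v-a g$ with $a:=\langle\psi,v\rangle/\|g\|^2$, which satisfies $|a|\le C\|v\|_{X^{\frac{m_2}{2}}}^2$. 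Then $w\in\Upsilon_0$ and $\langle w,\psi'\rangle=0$, so Proposition $\ref{prop2}$ applies and gives $\langle\mathcal{L}_0 w,w\rangle\ge c\|w\|_{X^{\frac{m_2}{2}}}^2$. Expanding $\langle\mathcal{L}_0 v,v\rangle=\langle\mathcal{L}_0 w,w\rangle+2a\langle\mathcal{L}_0 w,g\rangle+a^2\langle\mathcal{L}_0 g,g\rangle$ together with $\|w\|_{X^{\frac{m_2}{2}}}^2=\|v\|_{X^{\frac{m_2}{2}}}^2+O(\|v\|_{X^{\frac{m_2}{2}}}^3)$ (since $\|a g\|_{X^{\frac{m_2}{2}}}=O(\|v\|_{X^{\frac{m_2}{2}}}^2)$), and using the form-boundedness $|\langle\mathcal{L}_0 f,h\rangle|\le C\|f\|_{X^{\frac{m_2}{2}}}\|h\|_{X^{\frac{m_2}{2}}}$, all correction terms are $O(\|v\|_{X^{\frac{m_2}{2}}}^3)$, whence $\langle\mathcal{L}_0 v,v\rangle\ge c\|v\|_{X^{\frac{m_2}{2}}}^2-C\|v\|_{X^{\frac{m_2}{2}}}^3$.

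Combining the two estimates,
\be
E(u)-E(\psi)\ge\tfrac{c}{2}\|v\|_{X^{\frac{m_2}{2}}}^2-C\|v\|_{X^{\frac{m_2}{2}}}^3\ge\tfrac{c}{4}\|v\|_{X^{\frac{m_2}{2}}}^2\ge\tfrac{c}{4}\,\rho(u,\psi)^2
\ee
provided $\alpha$ is chosen small enough that $C\|v\|_{X^{\frac{m_2}{2}}}\le c/4$ on $U_\alpha$, which proves the proposition with $C=c/4$. I expect the main obstacle to be the third paragraph: because $F(u)=F(\psi)$ only forces orthogonality to $\psi$ up to a quadratic error, one must correct $v$ into $\Upsilon_0$ and verify that every resulting remainder is genuinely of order $\|v\|_{X^{\frac{m_2}{2}}}^3$ and hence absorbable for small $\alpha$; a secondary technical point is the modulation construction and the comparison of $\|v\|_{X^{\frac{m_2}{2}}}$ with $\rho(u,\psi)$.
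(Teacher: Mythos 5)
The paper does not prove this proposition itself but defers entirely to \cite[Lemma 4.6]{johnson}, and your argument is precisely the standard one underlying that lemma: pass to the Lyapunov functional $E+\omega_0F-A_0M$, modulate out the translation, Taylor-expand, and correct $v$ into $\Upsilon_0\cap[\psi']^{\perp}$ so that Proposition \ref{prop2} applies, with all corrections of cubic order. Your proof is correct (the key identity $\langle\psi,\psi-\bar\psi\rangle=\|\psi-\bar\psi\|^2$ does make $w$ land exactly in $\Upsilon_0$, and the required Sobolev embeddings are already assumed by the paper), so it supplies exactly the omitted details and follows the same route as the cited reference.
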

\begin{proof}
The proof can be found in \cite[Lemma 4.6]{johnson}. So, we omit the details.
\end{proof}

\indent Finally, we present sufficient conditions for the stability.

\begin{theorem}\label{teo2}
Assume that assumption $(H)$ holds and let us suppose that
$$
\mathcal{D}:=\left[\begin{array}{llll} F_{A}(\psi)\ \ M_A(\psi)\\
F_{\omega}(\psi)\ \ M_{\omega}(\psi)\end{array}\right]
$$ is invertible. If there is $\Phi\in X^{m_2}$ such that $\langle\mathcal{L}_0\Phi,\varphi\rangle=0$, for all $\varphi\in \Upsilon_0$, and $\mathcal{I}=\langle\mathcal{L}_0\Phi,\Phi\rangle<0$, then $\psi$ is orbitally stable in $X^{\frac{m_2}{2}}$ by the periodic flow of   $(\ref{equakawa})$.
\end{theorem}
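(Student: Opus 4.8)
The plan is to run the variational stability argument of Grillakis--Shatah--Strauss in the two--parameter periodic form used by Bona--Souganidis--Strauss and Johnson, the three analytic ingredients being the conserved functionals $E,F,M$, the coercivity furnished by Propositions~\ref{prop2} and~\ref{prop3}, and a modulation in $(\omega,A)$ powered by the invertibility of $\mathcal{D}$. The underlying object is the augmented energy
\begin{equation}
G(u):=E(u)+\omega_0 F(u)-A_0 M(u),
\end{equation}
whose Euler--Lagrange equation at $\psi$ is precisely the profile equation~(\ref{soltrav11}) and whose second variation at $\psi$ is the operator $\mathcal{L}_0$. Since $E$, $F$ and $M$ are conserved along the flow of~(\ref{equakawa}), so is $G$; moreover on the constraint set $\Sigma_0$ the functionals $F$ and $M$ are frozen, so that the coercive lower bound $E(u)-E(\psi)\geq C\rho(u,\psi)^2$ of Proposition~\ref{prop3} is exactly the statement that $\psi$ is a nondegenerate constrained minimizer of the energy. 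Note that the hypotheses on $\Phi$ and $\mathcal{I}<0$ in the present theorem are precisely those required to invoke Propositions~\ref{prop2} and~\ref{prop3}.

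First I would use the invertibility of $\mathcal{D}$ to build the modulation. Consider the smooth map $(\omega,A)\mapsto\big(F(\psi_{(\omega,A)}),M(\psi_{(\omega,A)})\big)$ provided by Theorem~\ref{teoexist}; its Jacobian at $(\omega_0,A_0)$ has entries $F_\omega,F_A,M_\omega,M_A$ and hence coincides, up to the arrangement of rows and columns, with $\mathcal{D}$. By the Implicit Function Theorem, invertibility of $\mathcal{D}$ allows one to solve, for every pair $(f,m)$ near $(F(\psi),M(\psi))$, for parameters $(\omega,A)$ near $(\omega_0,A_0)$ with $F(\psi_{(\omega,A)})=f$ and $M(\psi_{(\omega,A)})=m$; furthermore $\psi_{(\omega,A)}\to\psi$ in $X^{\frac{m_2}{2}}$ as $(f,m)\to(F(\psi),M(\psi))$. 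By Proposition~\ref{teo5} the spectral configuration of $\mathcal{L}_0$ persists for these neighbouring waves, so I would upgrade Proposition~\ref{prop3} to a \emph{uniform} coercivity statement: there are $\alpha>0$ and $C>0$, independent of $(\omega,A)$ in a small neighbourhood of $(\omega_0,A_0)$, such that $E(u)-E(\psi_{(\omega,A)})\geq C\,\rho\big(u,\psi_{(\omega,A)}\big)^2$ for every $u$ on the constraint set $\{F=F(\psi_{(\omega,A)}),\,M=M(\psi_{(\omega,A)})\}$ lying in the $\alpha$--tube around $O_{\psi_{(\omega,A)}}$.

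With these tools the conclusion follows by contradiction. If $\psi$ were not orbitally stable there would exist $\varepsilon_0\in(0,\alpha/2)$, initial data $u_0^n\to\psi$ in $X^{\frac{m_2}{2}}$, and, by continuity of $t\mapsto\rho(u_n(t),\psi)$ together with $\rho(u_0^n,\psi)\to0$, first times $t_n$ with $\rho(u_n(t_n),\psi)=\varepsilon_0$. Conservation of $E,F,M$ and continuity give $E(u_n(t_n))=E(u_0^n)\to E(\psi)$, $F(u_n(t_n))\to F(\psi)$ and $M(u_n(t_n))\to M(\psi)$. Applying the modulation to $(f,m)=(F(u_n(t_n)),M(u_n(t_n)))$ produces waves $\psi_n:=\psi_{(\omega_n,A_n)}$ with $F(\psi_n)=F(u_n(t_n))$, $M(\psi_n)=M(u_n(t_n))$ and $\psi_n\to\psi$; thus $u_n(t_n)$ lies on the constraint set attached to $\psi_n$ and, for $n$ large, inside the $\alpha$--tube of $O_{\psi_n}$. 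The uniform coercivity then yields $C\,\rho(u_n(t_n),\psi_n)^2\leq E(u_n(t_n))-E(\psi_n)\to0$, so $\rho(u_n(t_n),\psi_n)\to0$; combined with $\rho(\psi_n,\psi)\leq\|\psi_n-\psi\|_{X^{\frac{m_2}{2}}}\to0$ and the triangle inequality $\rho(u_n(t_n),\psi)\leq\rho(u_n(t_n),\psi_n)+\|\psi_n-\psi\|_{X^{\frac{m_2}{2}}}$, this forces $\rho(u_n(t_n),\psi)\to0$, contradicting $\rho(u_n(t_n),\psi)=\varepsilon_0>0$. Hence $\psi$ is orbitally stable.

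The step I expect to be the main obstacle is the uniform coercivity and the bookkeeping linking it to the modulated constraint sets. Proposition~\ref{prop3} is proved only for the fixed wave $\psi$ and the fixed manifold $\Sigma_0$, so I must verify that its constants $\alpha$ and $C$ can be chosen uniformly as $(\omega,A)$ ranges over a neighbourhood of $(\omega_0,A_0)$; this rests on the continuity of the spectral data from Proposition~\ref{teo5} and on the smooth dependence of $\psi_{(\omega,A)}$, of the auxiliary vector $\Phi$, and of the spectral decomposition~(\ref{decomp}) on the parameters. A secondary point is to reconcile the translation infimum defining $\rho$ with the modulation in $(\omega,A)$: because $F$ and $M$ are translation invariant the choice of optimal translate and the choice of $(\omega_n,A_n)$ decouple, but this independence must be stated with care so that $u_n(t_n)$ genuinely sits on the constraint set of $\psi_n$ for the relevant translate.
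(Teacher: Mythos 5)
Your proposal is correct and follows essentially the same route as the paper: coercivity of $E$ on the constraint manifold (Proposition \ref{prop3}) combined with a modulation of $(\omega,A)$ via the invertibility of $\mathcal{D}$ and the Inverse Function Theorem, then the triangle inequality; your contradiction-by-sequences phrasing is just a standard variant of the paper's bootstrap continuity argument in the first case. The uniformity of the coercivity constants over $(\omega,A)$ near $(\omega_0,A_0)$, which you rightly flag as the delicate point, is exactly the step the paper also leaves implicit under ``smoothness of the periodic wave with respect to the parameters.''
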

\begin{proof}
Let $\alpha>0$ be the constant such that Proposition $\ref{prop3}$ holds.  Since $E$ is continuous at $\psi$, for a given $\varepsilon>0$, there exists $\delta\in (0,\alpha)$ such that if $\|u_0-\psi\|_{X^{\frac{m_2}{2}}}<\delta$  one has
\be\label{estepsilon}
E(u_0)-E(\psi)<M\varepsilon^2,
\ee
where $M>0$ is the constant in Proposition \ref{prop3}. We need to divide our proof into two cases.\\
\indent \textit{First case.} $u_0\in \Sigma_0$. Since $F$ and $M$ are conserved quantities, if $u_0\in\Sigma_0$ one has that $u(t)\in \Sigma_0$, for all $t\geq0$. The time continuity of the function $\rho(u(t),\psi)$ allows to choose $T>0$ such that \be\label{subalpha}\rho(u(t),\psi)<\alpha,\ \ \  \mbox{for all}\ t\in [0,T).\ee
Thus, one obtains $u(t)\in U_{\alpha}$, for all $t\in[0,T)$. Combining Proposition \ref{prop3} and $(\ref{estepsilon})$, we have
\be\label{estepsilon1}
\rho(u(t),\psi)<\varepsilon,\ \ \ \ \ \mbox{for all}\ t\in[0,T).
\ee
\indent Next, we prove that $\rho(u(t),\psi)<\alpha$, for all $t\in [0,+\infty)$, from which one concludes the orbital stability restricted to perturbations in the manifold $\Sigma_0$. Indeed, let  $T_1>0$ be the supremum of the values of $T>0$ for which $(\ref{subalpha})$ holds. To obtain a contradiction, suppose that $T_1<+\infty$.  By choosing $\varepsilon<\frac{\alpha}{2}$ we obtain, from $(\ref{estepsilon1})$,
$$
\rho(u(t),\psi)<\frac{\alpha}{2}, \ \ \ \ \ \mbox{for all}\ t\in[0,T_1).
$$
Since $t\in(0,+\infty)\mapsto\rho(u(t),\psi)$ is continuous, there is $T_0>0$ such that
$\rho(u(t),\psi)<\frac{3}{4}\alpha<\alpha$, for $t\in [0,T_1+T_0)$, contradicting the maximality of $T_1$. Therefore, $T_1=+\infty$ and the theorem  is established if $u_0\in\Sigma_0$.\\
\indent \textit{Second case.} $u_0\notin \Sigma_0$. In this case, since $\det(\mathcal{D})\neq 0$, we claim that there is  $(\omega_1,A_1)\in\mathcal{O},$  such that $F(\psi_{(\omega_1,A_1)})=F(u_0)$ and $M(\psi_{(\omega_1,A_1)})=M(u_0)$.\\
\indent In fact, since $M$ and $F$ are smooth, the Inverse Function Theorem implies the existence of $r_1,r_2>0$ such that the map
$$\begin{array}{ccc}\Gamma:B_{r_1}(\omega_0,A_0)&\longrightarrow& B_{r_2}(M(\psi),F(\psi))\\
(\omega,A)&\mapsto& (M(\psi_{(\omega,A)}),F(\psi_{(\omega,A)}))\end{array},$$

\noindent is a smooth diffeomorphism. Here, $B_{r}((x,y))$ denotes the  open ball in $\mathbb{R}^2$ centered in $(x,y)$ with radius $r>0$.  The continuity of the functionals $M$ and $V$ gives (if necessary we can take a smaller $\delta>0$)
$$|M(u_0)-M(\psi)|<\dfrac{r_2}{\sqrt2} \quad \mbox{and} \quad |F(u_0)-F(\psi)|<\dfrac{r_2}{\sqrt2},
$$
 that is, $(M(u_0),F(u_0))\in B_{r_2}(M(\psi),F(\psi))$. Since $\Gamma$ is a diffeomorphism, there is a unique $(\omega_1,A_1)\in B_{r_1}(\omega_0,A_0)$ such that $(M(u_0),F(u_0))=(M(\psi_{(\omega_1,A_1)}),F(\psi_{(\omega_1,A_1)}))$. The claim is thus proved.\\
\indent The remainder of the proof follows from the smoothness of the periodic wave with respect to the parameters, the fact that the period does not change whether $(\omega,A)\in\mathcal{O}$ and the triangle inequality.
\end{proof}

Theorem \ref{teo2} establishes the orbital stability of $\psi$ provided $det(\mathcal{D})\neq0$ and $\mathcal{I}<0$. The next proposition gives a sufficient condition to show that $\mathcal{I}<0$.

\begin{proposition}\label{propKpos}
Let $P:\R^2\to\R$ be the function defined as
$$
P(x,y)=x^2F_{\omega}(\psi)+xy(F_A(\psi)+M_{\omega}(\psi))+y^2M_A(\psi).
$$
Assume that there is $(x_0,y_0)\in\R^2$ such that $P(x_0,y_0)>0$. Then there is $\Phi\in X^{m_2}$ such that   $\langle\mathcal{L}_0\Phi,\varphi\rangle=0$, for all $\varphi\in \Upsilon_0$, and
$$
\mathcal{I}=\langle \mathcal{L}_0\Phi,\Phi \rangle<0.
$$
\end{proposition}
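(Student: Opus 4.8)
The plan is to produce the required $\Phi$ explicitly as a linear combination of the two infinitesimal generators of the parameter family and then to reduce the whole assertion to a two–variable quadratic-form identity. First I would differentiate the profile equation $(\ref{soltrav11})$ with respect to the parameters at $(\omega_0,A_0)$. Differentiating in $\omega$ gives $\mathcal{L}_0\eta=-\psi$, and differentiating in $A$ gives $\mathcal{L}_0\beta=1$, where $\eta$ and $\beta$ are the functions introduced before the statement and $\mathcal{L}_0=\mathcal{M}+\omega_0-\psi$. Since the surface of Theorem $\ref{teoexist}$ consists of even functions, both $\eta$ and $\beta$ are even and lie in $X_{e}^{m_2}=D(\mathcal{L}_0)$; moreover, on the even sector the kernel of $\mathcal{L}_0$ (spanned by the odd function $\psi'$) is trivial, so $\mathcal{L}_0$ is boundedly invertible there and $\eta=-\mathcal{L}_0^{-1}\psi$, $\beta=\mathcal{L}_0^{-1}1$ are uniquely determined.

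Next I would translate the orthogonality requirement. Since $\Upsilon_0=\{1,\psi\}^{\perp}$ in $L_{per}^2([0,L_0])$, the condition $\langle\mathcal{L}_0\Phi,\varphi\rangle=0$ for every $\varphi\in\Upsilon_0$ is equivalent to $\mathcal{L}_0\Phi\in(\Upsilon_0)^{\perp}=\mathrm{span}\{1,\psi\}$. Applying $\mathcal{L}_0^{-1}$ on the even sector, this is exactly the requirement $\Phi\in\mathrm{span}\{\eta,\beta\}$. Hence, for the given point $(x_0,y_0)$ with $P(x_0,y_0)>0$, I would set $\Phi=-x_0\,\eta+y_0\,\beta\in X_{e}^{m_2}\subset X^{m_2}$, which yields $\mathcal{L}_0\Phi=x_0\psi+y_0$ and therefore fulfils the orthogonality constraint automatically.

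Finally I would compute $\mathcal{I}=\langle\mathcal{L}_0\Phi,\Phi\rangle=\langle x_0\psi+y_0,\,-x_0\eta+y_0\beta\rangle$ and expand, identifying the four resulting inner products with the parameter derivatives defined earlier, namely $\langle\psi,\eta\rangle=F_\omega(\psi)$, $\langle\psi,\beta\rangle=F_A(\psi)$, $\langle 1,\eta\rangle=M_\omega(\psi)$ and $\langle 1,\beta\rangle=M_A(\psi)$. The key algebraic input is the symmetry forced by self-adjointness of $\mathcal{L}_0$: pairing $\mathcal{L}_0\eta=-\psi$ against $\beta$ and $\mathcal{L}_0\beta=1$ against $\eta$ gives $F_A(\psi)=\langle\psi,\beta\rangle=-\langle\mathcal{L}_0\eta,\beta\rangle=-\langle\eta,\mathcal{L}_0\beta\rangle=-M_\omega(\psi)$, the relation that makes the Gram matrix of $\{\psi,1\}$ under $\mathcal{L}_0^{-1}$ symmetric and collapses the mixed terms. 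Feeding this relation into the expanded expression exhibits the quadratic form as $-P$ evaluated at $(x_0,y_0)$, so that $\mathcal{I}=-P(x_0,y_0)<0$, which is the claim. I expect the only genuinely delicate point to be precisely this last identification: tracking the signs coming from $\mathcal{L}_0\eta=-\psi$ and from the self-adjointness identity $F_A=-M_\omega$ so that the computed form matches $-P(x_0,y_0)$ exactly. Everything else—the differentiation of the profile equation, the membership $\Phi\in X_{e}^{m_2}$, and the reduction of the constraint to $\mathrm{span}\{\eta,\beta\}$—is routine.
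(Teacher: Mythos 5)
Your overall strategy is the paper's: take $\Phi$ in the span of $\eta$ and $\beta$, note that $\mathcal{L}_0\Phi$ then lies in $\mathrm{span}\{1,\psi\}=\Upsilon_0^{\perp}$ so the orthogonality against $\Upsilon_0$ is automatic, and identify $\langle\mathcal{L}_0\Phi,\Phi\rangle$ with $-P(x_0,y_0)$. However, the sign bookkeeping fails at exactly the point you flag as delicate, and the final identity you assert is false under your own conventions. You take $\mathcal{L}_0\eta=-\psi$, $\mathcal{L}_0\beta=+1$ and set $\Phi=-x_0\eta+y_0\beta$, so $\mathcal{L}_0\Phi=x_0\psi+y_0$ and
$$\langle\mathcal{L}_0\Phi,\Phi\rangle=-x_0^2F_{\omega}(\psi)+x_0y_0\bigl(F_A(\psi)-M_{\omega}(\psi)\bigr)+y_0^2M_A(\psi),$$
which is not $-P(x_0,y_0)=-x_0^2F_{\omega}(\psi)-x_0y_0(F_A(\psi)+M_{\omega}(\psi))-y_0^2M_A(\psi)$: the $y_0^2$ coefficient comes out as $+M_A(\psi)$ rather than $-M_A(\psi)$, and no choice of signs in $\pm x_0\eta\pm y_0\beta$ can repair this, since the $y_0^2$ term is always $\langle\mathcal{L}_0\beta,\beta\rangle\,y_0^2=+M_A(\psi)\,y_0^2$ once you fix $\mathcal{L}_0\beta=+1$. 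Your proposed rescue, the self-adjointness identity $F_A=-M_{\omega}$, cannot be right either: it would force the mixed coefficient $F_A+M_{\omega}$ of $P$ to vanish identically, whereas the identities $(\ref{FA})$ and $(\ref{relFF})$ used later in the section give $F_A(\psi)=M_{\omega}(\psi)$.

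The source of the trouble is a sign convention. Differentiating $(\ref{soltrav11})$ as printed does give $\mathcal{L}_0\beta=+1$, but the surface $\psi_{(\omega,A)}$ is actually constructed in Theorem \ref{teoexist} as the zero set of $\Pi(\omega,A,\psi)=\mathcal{M}\psi+\omega\psi-\frac{1}{2}\psi^2+A$, in which $A$ enters with the opposite sign; differentiating $\Pi=0$ yields $\mathcal{L}_0\eta=-\psi$ and $\mathcal{L}_0\beta=-1$, which is the convention used in $(\ref{traveta})$--$(\ref{travbeta})$ and throughout the rest of Section 2. With that convention, and with $F_A=M_{\omega}$ as the self-adjointness relation, the paper's choice $\Phi=x_0\eta+y_0\beta$ gives $\mathcal{L}_0\Phi=-x_0\psi-y_0$, and the expansion produces all four terms of $-P(x_0,y_0)$ with the correct signs, with no cancellation of mixed terms needed. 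So the fix is not a clever identity but simply adopting the sign of $A$ consistent with the construction of the surface; the rest of your argument (membership of $\Phi$ in $X_e^{m_2}$, reduction of the constraint to $\mathrm{span}\{\eta,\beta\}$) is sound and indeed somewhat more carefully justified than in the paper.
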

\begin{proof}
It suffices to define  $\Phi:=x_0\eta+y_0\beta$. Indeed, since $\mathcal{L}_0\beta=-1$ and $\mathcal{L}_0\eta=-\psi$, it is clear that  $\langle\mathcal{L}_0\Phi,\varphi\rangle=0$, for all $\varphi\in \Upsilon_0$, and
\[
\begin{split}
 \langle\mathcal{L}_0\Phi,\Phi \rangle&=\langle-x_0\psi-y_0,x_0\eta+y_0\beta\rangle\\
 &=-(x_0^2F_{\omega}(\psi)+x_0y_0F_A(\psi)+x_0y_0M_{\omega}(\psi)+y_0^2M_A(\psi))\\
 &=-P(x_0,y_0).
\end{split}
\]
The proof is thus completed.
\end{proof}

\begin{coro}\label{coro1}
Suppose that assumption $(H)$ occurs. If $\frac{M(\psi)}{L_0}>\omega_0>0$ then there exists $(x_0,y_0)\in\R^2$ such that $P(x_0,y_0)>0$.
\end{coro}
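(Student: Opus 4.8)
The plan is to produce an explicit point $(x_0,y_0)$ and reduce the claim to the mean condition via the identity already recorded in the proof of Proposition \ref{propKpos}: for $\Phi=x\eta+y\beta$ one has $\langle\mathcal{L}_0\Phi,\Phi\rangle=-P(x,y)$. Hence it suffices to find $(x_0,y_0)$ for which $\langle\mathcal{L}_0\Phi,\Phi\rangle<0$, and the whole task is to locate a single negative direction of $\mathcal{L}_0$ inside the two-dimensional test space $\mathrm{span}\{\eta,\beta\}$.

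The starting observation is that, since the symbol $\theta$ of $\mathcal{M}$ satisfies $\theta(0)=0$, we have $\mathcal{M}1=0$, so applying $\mathcal{L}_0=\mathcal{M}+\omega_0-\psi$ to the constant function yields $\mathcal{L}_0 1=\omega_0-\psi$. The key step is then to notice that the constant function already lies in $\mathrm{span}\{\eta,\beta\}$. Indeed, recalling from the proof of Proposition \ref{propKpos} that $\mathcal{L}_0\eta=-\psi$ and $\mathcal{L}_0\beta=-1$, one computes $\mathcal{L}_0(\eta-\omega_0\beta)=-\psi+\omega_0=\mathcal{L}_0 1$. Because $\eta$ and $\beta$ are even (they are $\omega$- and $A$-derivatives of the even surface from Theorem \ref{teoexist}) while $\ker(\mathcal{L}_0)=[\psi']$ is spanned by the odd function $\psi'$, injectivity of $\mathcal{L}_0$ on the even subspace forces $\eta-\omega_0\beta=1$.

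With this identity in hand I would simply take $(x_0,y_0)=(-1,\omega_0)$, so that $\Phi=x_0\eta+y_0\beta=-\eta+\omega_0\beta=-1$, which is admissible since it is smooth and constant, hence in $X^{m_2}$. Then $\langle\mathcal{L}_0\Phi,\Phi\rangle=\langle\mathcal{L}_0 1,1\rangle=\langle\omega_0-\psi,1\rangle=\omega_0 L_0-M(\psi)$, and therefore $P(x_0,y_0)=-\langle\mathcal{L}_0\Phi,\Phi\rangle=M(\psi)-\omega_0 L_0=L_0\big(\tfrac{M(\psi)}{L_0}-\omega_0\big)$, which is strictly positive precisely under the hypothesis $\frac{M(\psi)}{L_0}>\omega_0$. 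This closes the argument.

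The only genuinely non-routine point is recognizing that the test space $\mathrm{span}\{\eta,\beta\}$ contains the constants, i.e. the relation $\eta-\omega_0\beta=1$; once this is spotted, the conclusion is a one-line evaluation of $\langle\mathcal{L}_0 1,1\rangle$. Accordingly, I expect no real obstacle beyond identifying this relation, the one technical check being that $\mathcal{L}_0$ is injective on even functions so that $\eta-\omega_0\beta=1$ is forced rather than merely true modulo the kernel.
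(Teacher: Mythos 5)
Your proof is correct, and it lands on exactly the same test point as the paper (the paper takes $y_0\neq0$ arbitrary and $x_0=-y_0/\omega_0$, which for $y_0=\omega_0$ is your $(-1,\omega_0)$), but the route is genuinely more streamlined. The paper first integrates the two variational equations $\mathcal{L}_0\eta=-\psi$ and $\mathcal{L}_0\beta=-1$ over a period to obtain $F_{\omega}(\psi)=\omega_0 M_{\omega}(\psi)+M(\psi)$ and $F_{A}(\psi)=\omega_0 M_{A}(\psi)+L_0$, then derives $L_0=M_{\omega}(\psi)-\omega_0 M_{A}(\psi)$ from $1=\omega_0\mathcal{L}_0^{-1}1-\mathcal{L}_0^{-1}\psi$, substitutes all three into the quadratic form $P$, and only then optimizes over $(x_0,y_0)$. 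You instead observe that the relation $1=\omega_0\mathcal{L}_0^{-1}1-\mathcal{L}_0^{-1}\psi$ can be upgraded, via evenness of $\eta$, $\beta$ and the fact that $\ker(\mathcal{L}_0)$ is spanned by the odd function $\psi'$, to the pointwise identity $\eta-\omega_0\beta=1$ (the paper's $L_0=M_{\omega}(\psi)-\omega_0 M_{A}(\psi)$ is just its integrated form), so that $\Phi=-\eta+\omega_0\beta$ is the constant $-1$ and $\mathcal{I}=\langle\mathcal{L}_0 1,1\rangle=\omega_0 L_0-M(\psi)$ drops out in one line. What your version buys is a transparent explanation of \emph{why} the direction $x_0=-y_0/\omega_0$ is the right one, and it avoids the identities for $F_{\omega}$ and $F_{A}$ altogether; what the paper's version buys is that those same identities are reused immediately afterwards in Corollary~\ref{coro2} to compute $\det(\mathcal{D})$, so they are not wasted effort in context. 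The one point you rightly flag as needing care, injectivity of $\mathcal{L}_0$ on even functions, is exactly the parity argument already used in the proof of Theorem~\ref{teoexist}, so it is available.
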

\begin{proof}
From assumption $(H)$ one gets Theorem $\ref{teoexist}$ and consequently, it is possible to derive equation $(\ref{soltrav11})$ with respect to $\omega$ and $A$ to get, respectively
\begin{equation}\label{traveta}
\mathcal{M}\eta+\omega\eta+\psi-\psi\eta=0,
\end{equation}
and,
\begin{equation}\label{travbeta}
\mathcal{M}\beta+\omega\beta-\psi\beta+1=0.
\end{equation}
Next, integrating equations $(\ref{traveta})$ and $(\ref{travbeta})$ over $[0,L_0]$ we deduce, respectively
\begin{equation}\label{Fomega}
F_{\omega}(\psi)=\omega M_{\omega}(\psi)+M(\psi),
\end{equation}
and,
\begin{equation}\label{FA}
F_{A}(\psi)=\omega M_{A}(\psi)+L_0.
\end{equation}

\indent On the other hand, since $1\in D(\mathcal{L})$ we have from $(\ref{operator})$ that $\mathcal{L}1=\omega-\psi_{(\omega,A)}$. The fact that $1,\psi_{(\omega,A)}\in [\psi_{(\omega,A)}']^{\bot}$, for all $(\omega,A)\in \mathcal{O}$, enables us to obtain $1=\omega\mathcal{L}^{-1}1-\mathcal{L}^{-1}\psi_{(\omega,A)}$, for all $(\omega,A)\in \mathcal{O}$. Therefore, from $(\ref{operator})$, $(\ref{traveta})$ and $(\ref{travbeta})$ we conclude
\begin{equation}\label{relFF}
L_0=-\omega_0 M_A(\psi)+M_{\omega}(\psi).
\end{equation}
So, collecting the results in $(\ref{Fomega})$, $(\ref{FA})$ and $(\ref{relFF})$ we have
\begin{equation}\label{Pab}\begin{array}{llll}
P(x_0,y_0)&=&x_0^2F_{\omega}(\psi)+x_0y_0F_A(\psi)+x_0y_0M_{\omega}(\psi)+y_0^2M_A(\psi)\\\\
&=&\left(x_0^2\omega_0+2x_0y_0+\frac{y_0^2}{\omega_0}\right)M_{\omega}(\psi)+x_0^2M(\psi)-\frac{y_0^2L_0}{\omega_0}
\end{array}\end{equation}
\indent Choosing $y_0\neq0$ and $x_0=-\frac{y_0}{\omega_0}$, one has from $(\ref{Pab})$
\begin{equation}\label{Pab1}
P(x_0,y_0)=\frac{y_0^2}{\omega_0^2}M(\psi)-\frac{y_0^2L_0}{\omega_0}=\frac{y_0^2L_0}{\omega_0^2}\left(\frac {M(\psi)}{L_0}-\omega_0\right).
\end{equation}
The fact that $\frac{M(\psi)}{L_0}>\omega_0$ enables us to finish the proof.\end{proof}

\begin{coro}\label{coro2}
Suppose that assumption $(H)$ occurs. Thus $$\det(\mathcal{D})=\frac{L_0}{\omega_0}\left(\omega_0-\frac{M(\psi)}{L_0}\right)M_{\omega}(\psi)+M(\psi)\frac{L_0}{\omega_0}.$$ In particular, if $\frac{M(\psi)}{L_0}>\omega_0>0$ and 
$M_{\omega}(\psi)<0$, we obtain that $\det(\mathcal{D})\neq0$ and the periodic wave $\psi$ is orbitally stable in the sense of Definition $\ref{defi1}$.
\end{coro}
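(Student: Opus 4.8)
The plan is to reduce the $2\times 2$ determinant to the claimed one-line formula using only the three identities already established in the proof of Corollary $\ref{coro1}$, namely the relations $(\ref{Fomega})$, $(\ref{FA})$ and $(\ref{relFF})$. All six quantities in $\mathcal{D}$ are understood to be evaluated at $(\omega,A)=(\omega_0,A_0)$, so throughout the parameter $\omega$ is replaced by $\omega_0$; keeping this evaluation consistent across every identity is the only bookkeeping point that requires care.

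First I would expand the determinant as
\[
\det(\mathcal{D})=F_A(\psi)M_\omega(\psi)-M_A(\psi)F_\omega(\psi),
\]
and then substitute $F_A(\psi)=\omega_0 M_A(\psi)+L_0$ from $(\ref{FA})$ together with $F_\omega(\psi)=\omega_0 M_\omega(\psi)+M(\psi)$ from $(\ref{Fomega})$. The cross terms $\omega_0 M_A(\psi)M_\omega(\psi)$ cancel, leaving the compact expression
\[
\det(\mathcal{D})=L_0 M_\omega(\psi)-M(\psi)M_A(\psi).
\]

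Next I would eliminate $M_A(\psi)$ via $(\ref{relFF})$, which rearranges to $M_A(\psi)=\frac{M_\omega(\psi)-L_0}{\omega_0}$. Substituting and regrouping yields
\[
\det(\mathcal{D})=L_0 M_\omega(\psi)-\frac{M(\psi)}{\omega_0}M_\omega(\psi)+\frac{L_0 M(\psi)}{\omega_0},
\]
which is precisely $\frac{L_0}{\omega_0}\bigl(\omega_0-\frac{M(\psi)}{L_0}\bigr)M_\omega(\psi)+M(\psi)\frac{L_0}{\omega_0}$, the asserted formula.

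Finally, for the ``in particular'' claim I would check the sign of each summand under the hypotheses $\frac{M(\psi)}{L_0}>\omega_0>0$ and $M_\omega(\psi)<0$. The factor $\omega_0-\frac{M(\psi)}{L_0}$ is negative, so its product with $M_\omega(\psi)<0$ is positive, and $\frac{L_0}{\omega_0}>0$; hence the first term is positive. Moreover $\frac{M(\psi)}{L_0}>\omega_0>0$ with $L_0>0$ forces $M(\psi)>\omega_0 L_0>0$, so the second term $M(\psi)\frac{L_0}{\omega_0}$ is positive as well. Therefore $\det(\mathcal{D})>0$, and in particular $\det(\mathcal{D})\neq 0$. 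The whole argument is a routine computation once $(\ref{Fomega})$, $(\ref{FA})$ and $(\ref{relFF})$ are in hand, so there is no genuine obstacle; the only thing to monitor is the uniform evaluation at $(\omega_0,A_0)$.
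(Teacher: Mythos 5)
Your computation is correct and follows exactly the paper's own route: expand $\det(\mathcal{D})=F_A(\psi)M_\omega(\psi)-M_A(\psi)F_\omega(\psi)$, substitute the identities $(\ref{Fomega})$, $(\ref{FA})$ and $(\ref{relFF})$, cancel the cross terms, and then check signs for the ``in particular'' claim. Your explicit verification that both summands are positive (so $\det(\mathcal{D})>0$) is a welcome elaboration of the sign argument the paper leaves implicit, but it is not a different method.
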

\begin{proof}
In fact, from $(\ref{Fomega})$, $(\ref{FA})$ and $(\ref{relFF})$ we have
\begin{equation}\label{detD}\begin{array}{lllll}
\det(\mathcal{D})&=&F_A(\psi)M_{\omega}(\psi)-F_{\omega}(\psi)M_{A}(\psi)\\\\
&=&\omega_0M_A(\psi)M_{\omega}(\psi)+L_0M_{\omega}(\psi)-\omega_0M_{\omega}(\psi)M_A(\psi)-M(\psi)M_A(\psi)\\\\
&=&L_0M_{\omega}(\psi)-M(\psi)M_A(\psi)=L_0M_{\omega}(\psi)+\frac{M(\psi)L_0}{\omega_0}-\frac{M_{\omega}(\psi)M(\psi)}{\omega_0}\\\\
&=& \frac{L_0}{\omega_0}\left(\omega_0-\frac{M(\psi)}{L_0}\right)M_{\omega}(\psi)+\frac{M(\psi)L_0}{\omega_0}.
\end{array}
\end{equation}
From Proposition $\ref{propKpos}$ and Corollary $\ref{coro1}$, we deduce from Theorem $\ref{teo2}$ that the periodic wave $\psi$ is orbitally stable in the sense of the Definition $\ref{defi1}$.
\end{proof}

\indent Corollary $\ref{coro2}$ guarantees the orbital stability provided that assumption in $(H)$ is satisfied joint with 
$\frac{M(\psi)}{L_0}>\omega_0>0$  and $M_{\omega}(\psi)<0$. Thus, it remains to prove what happens with the orbital stability of $\psi$, if one considers the cases $\frac{M(\psi)}{L_0}>\omega_0>0$ and $M_{\omega}(\psi)\geq0$. This particular case is determined in a different way since we can not assure that $\det(\mathcal{D})\neq0$ in order to apply the arguments in Theorem $\ref{teo2}$. However, it is easy to see from $(\ref{Fomega})$ that $P(1,0)=F_{\omega}(\psi)=\omega_0M_{\omega}(\psi)+M(\psi)>0$. This information about the positivity of $F_{\omega}(\psi)$ enables us to enunciate the following result.\\
\begin{coro}\label{coroest1}
Suppose that assumption $(H)$ occurs. Let us assume that $\frac{M(\psi)}{L_0}>\omega_0>0$ and 
$M_{\omega}(\psi)\geq0$. Thus, the periodic wave $\psi$ is orbitally stable in the sense of Definition $\ref{defi1}$.
\end{coro}
\indent To prove Corollary $\ref{coroest1}$, we need to follow the arguments contained in \cite{natali1}. In fact, let us consider
\begin{equation}\label{eq21}\mathcal{P}_{(\omega,A)}=E+\omega F+AM\end{equation}
and the perturbation
\begin{equation}\label{eq22}u(x+y,t)=\psi_{(\omega,A)}(x)+v(x,t),\end{equation} where
$y=y(t)$ is the minimum point of the function
$$\Gamma_t(s)=\int_0^{L_0}(\mathcal{M}^{1/2}(u(x+y,t)-\psi_{(\omega,A)}))^{2}dx
+\omega\int_0^{L_0}(u(x+y,t)-\psi_{(\omega,A)}(x))^2dx,$$
$y\in\mathbb{R}$, and function $v$ satisfies the compatibility
condition
\begin{equation}\label{compat}\int_{0}^{L_0}\psi_{(\omega,A)}(x)\psi_{(\omega,A)}'(x)v(x,t)dx=0,\end{equation}
for all $t\in\mathbb{R}$.\\
\indent Thus, we obtain from $(\ref{eq21})$ and $(\ref{eq22})$ the following inequality
\begin{equation}\label{deltaR}\begin{array}{lllll}
\Delta\mathcal{P}_{(\omega,A)}&:=&\displaystyle\mathcal{P}_{(\omega,A)}(u)-\mathcal{P}_{(\omega,A)}(\psi_{(\omega,A)})
=\mathcal{P}_{(\omega,A)}(\psi_{(\omega,A)}+v)-\mathcal{P}_{(\omega,A)}(\psi_{(\omega,A)})\\\\
&\geq&\displaystyle
\frac{1}{2}\langle\mathcal{L}v,v\rangle-C_0||v||_{X^{\frac{m_2}{2}}}^3,\end{array}
\end{equation}
where $C_0\in\mathbb{R}$ is a positive constant which depend on the periodic wave $\psi_{(\omega,A)}$ and the constant of the Sobolev embeddings $X^{\frac{m_2}{2}}\hookrightarrow L_{per}^p([0,L_0])$, $p\geq2$, integer.\\
\indent Next, it is necessary to use the works due to
\cite{be} and \cite{bona1}, to establish convenient bounds
for the term $\langle\mathcal{L}v,v\rangle$. First, we need a preliminary result.
\begin{lema}\label{weinstein}
Let $\psi_{(\omega,A)}$ be as in Theorem $\ref{teoexist}$. Let
$\mathcal{L}$ be the self-adjoint operator defined in $(\ref{operator})$.
 We define
$$-\infty<w:=\min_{\phi}\{
\langle\mathcal{L}\phi,\phi\rangle; \;\;||\phi||_{L_{per}^2}=1\;\;
and \;\; \langle\phi,\psi_{(\omega,A)}\rangle=0 \}.
$$
Assuming that $\langle\chi_{(\omega,A)},\psi_{(\omega,A)}\rangle\neq0$
and $\psi_{(\omega,A)}\in [\ker(\mathcal{L})]^{\bot}$,
where $\chi_{(\omega,A)}$ is the eigenfunction associated with the
negative eigenvalue of $\mathcal{L}$. Then, if
\begin{equation}\label{weinsteincond}\langle\mathcal{L}^{-1}\psi_{(\omega,A)},\psi_{(\omega,A)}\rangle\leq0,\end{equation}
it follows that $w\geq 0$.
\end{lema}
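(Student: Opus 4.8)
The plan is to establish the equivalent statement that $w\geq0$, i.e. that the quadratic form $\la\mathcal{L}\phi,\phi\ra$ is nonnegative on the admissible set $\{\phi:\ \|\phi\|_{L^2_{per}}=1,\ \la\phi,\psi_{(\omega,A)}\ra=0\}$. First I would record two elementary bounds on $w$. Testing the Rayleigh quotient against $\phi=\psi_{(\omega,A)}'/\|\psi_{(\omega,A)}'\|$, which is admissible because $\psi_{(\omega,A)}'$ is odd and $\psi_{(\omega,A)}$ is even so that $\la\psi_{(\omega,A)}',\psi_{(\omega,A)}\ra=0$, and because $\mathcal{L}\psi_{(\omega,A)}'=0$, gives $w\leq0$. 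On the other hand, writing any admissible $\phi$ as $\phi=a\chi_{(\omega,A)}+\phi^{\perp}$ with $\phi^{\perp}\perp\chi_{(\omega,A)}$ and using that $\mathcal{L}$ is nonnegative on $[\chi_{(\omega,A)}]^{\perp}$ (its only negative eigenvalue being simple, with eigenfunction $\chi_{(\omega,A)}$ and $\mathcal{L}\chi_{(\omega,A)}=-\lambda_0^2\chi_{(\omega,A)}$) yields $\la\mathcal{L}\phi,\phi\ra\geq-\lambda_0^2a^2\geq-\lambda_0^2$, with equality forcing $\phi=\pm\chi_{(\omega,A)}$; since $\la\chi_{(\omega,A)},\psi_{(\omega,A)}\ra\neq0$ this is incompatible with admissibility, so in fact $w>-\lambda_0^2$. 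The minimum is attained because $\mathcal{L}$ is bounded below and has compact resolvent. Thus $w\in(-\lambda_0^2,0]$, and it remains only to exclude $w<0$.

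Assume, for contradiction, that $w\in(-\lambda_0^2,0)$, and let $\phi_0$ be a minimizer. The Lagrange multiplier rule for the two constraints $\|\phi\|=1$ and $\la\phi,\psi_{(\omega,A)}\ra=0$ produces the Euler--Lagrange identity $\mathcal{L}\phi_0=w\phi_0+\mu\psi_{(\omega,A)}$ for some $\mu\in\R$. Since the negative eigenvalue $-\lambda_0^2$ and $0$ are consecutive eigenvalues of $\mathcal{L}$, the interval $(-\lambda_0^2,0)$ contains no point of the spectrum, so $\mathcal{L}-w$ is boundedly invertible. If $\mu=0$ then $w$ would be an eigenvalue of $\mathcal{L}$, which is impossible; hence $\mu\neq0$ and $\phi_0=\mu(\mathcal{L}-w)^{-1}\psi_{(\omega,A)}$. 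Imposing the orthogonality constraint and dividing by $\mu$ gives $g(w)=0$, where I set $g(s):=\la(\mathcal{L}-s)^{-1}\psi_{(\omega,A)},\psi_{(\omega,A)}\ra$.

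The contradiction then comes from monotonicity of $g$. Expanding in an orthonormal eigenbasis $\{e_n\}$ of $\mathcal{L}$, with eigenvalues ordered $-\lambda_0^2<0<\lambda_1\leq\lambda_2\leq\cdots$ and $e_{-1}=\chi_{(\omega,A)}$, one has $g(s)=\sum_n|c_n|^2/(\lambda_n-s)$ with $c_n=\la\psi_{(\omega,A)},e_n\ra$; the kernel term drops because $\psi_{(\omega,A)}\in[\ker(\mathcal{L})]^{\perp}$. Hence $g'(s)=\sum_n|c_n|^2/(\lambda_n-s)^2>0$, so $g$ is strictly increasing on the interval between $-\lambda_0^2$ and the first positive eigenvalue at which $\psi_{(\omega,A)}$ has a nonzero projection. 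Because $\la\chi_{(\omega,A)},\psi_{(\omega,A)}\ra\neq0$, the term with $\lambda_{-1}=-\lambda_0^2$ has nonzero numerator, so $g(s)\to-\infty$ as $s\downarrow-\lambda_0^2$ and $g(s)\to+\infty$ at the other end; thus $g$ has a unique zero $s^{*}$ in this interval. Evaluating at the interior point $s=0$ (no pole there, since $c_0=0$) gives $g(0)=\la\mathcal{L}^{-1}\psi_{(\omega,A)},\psi_{(\omega,A)}\ra\leq0$ by $(\ref{weinsteincond})$, and since $g$ is increasing this forces $s^{*}\geq0$. Finally $w<0\leq s^{*}$ together with strict monotonicity gives $g(w)<g(s^{*})=0$, contradicting $g(w)=0$; therefore $w\geq0$. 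The delicate points to get right are the attainment of the minimum and the justification of the Euler--Lagrange equation together with the invertibility of $\mathcal{L}-w$ on $(-\lambda_0^2,0)$; once these are in place, the sign of the unique zero of $g$ is pinned down entirely by the two hypotheses $\la\chi_{(\omega,A)},\psi_{(\omega,A)}\ra\neq0$ and $(\ref{weinsteincond})$.
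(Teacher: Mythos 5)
Your argument is correct: it is the standard proof of Weinstein's Lemma E.1 (attainment of the constrained minimum, the Euler--Lagrange identity $\mathcal{L}\phi_0=w\phi_0+\mu\psi_{(\omega,A)}$, and the strict monotonicity of $g(s)=\langle(\mathcal{L}-s)^{-1}\psi_{(\omega,A)},\psi_{(\omega,A)}\rangle$ on the spectral gap, combined with $g(0)\leq 0$). The paper offers no proof of its own here --- it simply cites that lemma of Weinstein --- so your write-up is essentially the same approach, filled in.
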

\begin{proof}
See Lemma E.1 in \cite{weinstein2}.
\end{proof}
\begin{obs} To obtain that $\langle\chi_{(\omega,A)},\psi_{(\omega,A)}\rangle\neq0$ we need to use a Krein-Ruttman Theorem to guarantee that the eigenfunction $\chi_{(\omega,A)}$ related to the first eigenvalue of $\mathcal{L}$ is one-signed on $\mathbb{R}$ and the fact that $\psi_{(\omega,A)}$ is positive.
\end{obs}
Next, since
$$\langle\mathcal{L}^{-1}\psi_{(\omega,A)},\psi_{(\omega,A)}\rangle=-\frac{1}{2}\frac{d}{d\omega}\int_0^{L_0}\psi_{(\omega,A)}^2(x)dx,$$
one has that $(\ref{weinsteincond})$ occurs at the point $(\omega_0,A_0)\in\mathcal{O}$ if, and only if
\begin{equation}\label{calc1}
\frac{1}{2}\frac{d}{d\omega}\int_0^{L_0}\psi_{(\omega,A)}^2(x)dx\Big|_{(\omega,A)=(\omega_0,A_0)}=F_{\omega}(\psi)>0.
\end{equation} Thus, by using that $M_{\omega}(\psi)>0$ and $M(\psi)>0$, we obtain that $\langle\mathcal{L}^{-1}\psi_{(\omega,A)},\psi_{(\omega,A)}\rangle<0$ for all
$(\omega,A)\in\mathcal{O}$.\\

Lemma $\ref{weinstein}$ jointly with $(\ref{calc1})$ will be useful to establish next result.
\begin{lema}\label{lema2}
Let $\psi_{(\omega,A)}$ be as in Theorem $\ref{teoexist}$. Then,
for $(\omega,A)\in \mathcal{O}$, we have
\begin{enumerate}
\item[(i)]{$\displaystyle\inf\{\langle\mathcal{L}f,f\rangle;
\ ||f||=1,\ \langle f, \psi_{(\omega,A)}\rangle=0\}=0.$}\\
\item[(ii)]{$\displaystyle\inf\{\langle\mathcal{L}f,f\rangle;
\ ||f||=1,\ \langle f, \psi_{(\omega,A)}\rangle=0,\ \langle f,
\psi_{(\omega,A)}\psi_{(\omega,A)}'\rangle=0\}>0.$}
\end{enumerate}
\end{lema}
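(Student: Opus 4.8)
The plan is to handle the two items separately: item (i) will come directly from Lemma \ref{weinstein} together with the sign of $\langle\mathcal{L}^{-1}\psi_{(\omega,A)},\psi_{(\omega,A)}\rangle$, and item (ii) will be deduced from (i) by a Lagrange-multiplier argument in which the extra constraint $\langle f,\psi_{(\omega,A)}\psi_{(\omega,A)}'\rangle=0$ removes precisely the kernel direction $\psi_{(\omega,A)}'$ that makes the infimum in (i) vanish. Throughout I would first record two elementary orthogonality relations. Since $\psi_{(\omega,A)}$ is even, $\psi_{(\omega,A)}'$ is odd, so $\langle\psi_{(\omega,A)}',\psi_{(\omega,A)}\rangle=\frac12\int_0^{L_0}(\psi_{(\omega,A)}^2)'\,dx=0$; and, because $\psi_{(\omega,A)}>0$, we have $\langle\psi_{(\omega,A)}',\psi_{(\omega,A)}\psi_{(\omega,A)}'\rangle=\int_0^{L_0}\psi_{(\omega,A)}(\psi_{(\omega,A)}')^2\,dx>0$. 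These two facts are the structural engine of the whole argument.

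For (i), the first relation shows that $\psi_{(\omega,A)}'$ is admissible in the variational problem; since $\mathcal{L}\psi_{(\omega,A)}'=0$, testing with $f=\psi_{(\omega,A)}'/\|\psi_{(\omega,A)}'\|$ yields $w\le 0$ for the quantity $w$ defined in Lemma \ref{weinstein}. For the reverse inequality I would invoke Lemma \ref{weinstein} itself: its hypotheses $\langle\chi_{(\omega,A)},\psi_{(\omega,A)}\rangle\neq0$ and $\psi_{(\omega,A)}\in[\ker\mathcal{L}]^{\bot}$ hold by the Remark preceding it and by the first orthogonality relation, while the sign condition \eqref{weinsteincond} holds since $\langle\mathcal{L}^{-1}\psi_{(\omega,A)},\psi_{(\omega,A)}\rangle=-F_{\omega}(\psi)<0$ by \eqref{calc1} (recall $F_{\omega}(\psi)=\omega_0M_{\omega}(\psi)+M(\psi)>0$ in the present regime). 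Lemma \ref{weinstein} then gives $w\ge 0$, so $w=0$, which is exactly (i).

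For (ii), set $V=\{f:\langle f,\psi_{(\omega,A)}\rangle=0,\ \langle f,\psi_{(\omega,A)}\psi_{(\omega,A)}'\rangle=0\}$ and $\nu=\inf\{\langle\mathcal{L}f,f\rangle:\|f\|=1,\ f\in V\}$. Since $V$ is contained in the constraint set of (i), part (i) immediately gives $\nu\ge0$, so it remains to exclude $\nu=0$. First I would argue that $\nu$ is attained: $\mathcal{L}$ is bounded below and has compact resolvent (as in the proof of Theorem \ref{teoexist}, using $X^{\frac{m_2}{2}}\hookrightarrow X^0$ compactly), so a minimizing sequence in the closed codimension-two subspace $V$ is bounded in $X^{\frac{m_2}{2}}$ and, by weak lower semicontinuity of the form and strong $L^2$ compactness, converges to a minimizer $f_0\in V$ with $\|f_0\|=1$. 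Assuming $\nu=0$, the Lagrange rule yields $\alpha,\beta,\gamma\in\mathbb{R}$ with $\mathcal{L}f_0=\alpha f_0+\beta\psi_{(\omega,A)}+\gamma\,\psi_{(\omega,A)}\psi_{(\omega,A)}'$. Pairing with $f_0$ and using the constraints gives $\alpha=\langle\mathcal{L}f_0,f_0\rangle=0$; pairing with $\psi_{(\omega,A)}'\in\ker\mathcal{L}$ and using the two orthogonality relations forces $\gamma\int_0^{L_0}\psi_{(\omega,A)}(\psi_{(\omega,A)}')^2\,dx=0$, hence $\gamma=0$, so $\mathcal{L}f_0=\beta\psi_{(\omega,A)}$.

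It then remains to rule out the two subcases. If $\beta=0$ then $f_0\in\ker\mathcal{L}=[\psi_{(\omega,A)}']$, so $f_0=c\,\psi_{(\omega,A)}'$, and the constraint $\langle f_0,\psi_{(\omega,A)}\psi_{(\omega,A)}'\rangle=c\int_0^{L_0}\psi_{(\omega,A)}(\psi_{(\omega,A)}')^2\,dx=0$ forces $c=0$, contradicting $\|f_0\|=1$. If $\beta\neq0$, then since $\psi_{(\omega,A)}\perp\ker\mathcal{L}$ I can solve $\mathcal{L}f_0=\beta\psi_{(\omega,A)}$ as $f_0=\beta\,\mathcal{L}^{-1}\psi_{(\omega,A)}+c\,\psi_{(\omega,A)}'$, and the constraint $\langle f_0,\psi_{(\omega,A)}\rangle=0$ with $\langle\psi_{(\omega,A)}',\psi_{(\omega,A)}\rangle=0$ gives $\beta\langle\mathcal{L}^{-1}\psi_{(\omega,A)},\psi_{(\omega,A)}\rangle=0$, i.e. $\langle\mathcal{L}^{-1}\psi_{(\omega,A)},\psi_{(\omega,A)}\rangle=0$, contradicting $\langle\mathcal{L}^{-1}\psi_{(\omega,A)},\psi_{(\omega,A)}\rangle=-F_{\omega}(\psi)<0$. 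Thus $\nu>0$, which is (ii). I expect the main technical obstacle to be the justification of attainment of $\nu$ and the legitimacy of the Lagrange rule on the infinite-dimensional constrained sphere; this is exactly where the compact embedding $X^{\frac{m_2}{2}}\hookrightarrow X^0$ and the discreteness of $\sigma(\mathcal{L})$ are indispensable, following \cite{be} and \cite{bona1}.
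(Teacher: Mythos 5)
Your proof is correct and follows essentially the route the paper intends: the paper only cites \cite[Lemma 4.2]{natali1}, but it sets up Lemma \ref{weinstein} and the sign of $\langle\mathcal{L}^{-1}\psi_{(\omega,A)},\psi_{(\omega,A)}\rangle=-F_{\omega}(\psi)<0$ immediately beforehand precisely so that (i) follows from Weinstein's Lemma E.1 and (ii) from the standard compactness/Lagrange-multiplier argument you carry out. The only point worth recording is that the lemma is asserted for all $(\omega,A)\in\mathcal{O}$, so one should note (as the paper does) that $F_{\omega}(\psi_{(\omega,A)})>0$ persists on $\mathcal{O}$ by continuity, not just at $(\omega_0,A_0)$.
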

\begin{proof} The proof follows from similar arguments as in \cite[Lemma 4.2]{natali1}.
\end{proof}

\textit{Proof of the Corollary $\ref{coroest1}$.} Firstly, we estimate term
$\langle\mathcal{L}v,v\rangle$ from below by assuming without loss of generality that $||\psi_{(\omega,A)}||=1$. Let us define
\begin{equation}\label{decomp}v_{\bot}=v-v_{||},\ \ \ \mbox{where}\ \ \ v_{||}=\langle v,\psi_{(\omega,A)}\rangle \psi_{(\omega,A)}.\end{equation}
From $(\ref{compat})$ and $(\ref{decomp})$ one has
$$\langle v_{\bot},\psi_{(\omega,A)}\psi_{(\omega,A)}'\rangle=\langle v,\psi_{(\omega,A)}\psi_{(\omega,A)}'\rangle-\langle v_{||},\psi_{(\omega,A)}\psi_{(\omega,A)}'\rangle
=\langle v,\psi_{(\omega,A)}\rangle\langle \psi_{(\omega,A)}^2,\psi_{(\omega,A)}'\rangle=0.$$
In addition, since $\langle v_{\bot},\psi_{(\omega,A)}\rangle=0$, Lemma $\ref{lema2}$ yields
\begin{equation}\label{propy1}
\langle \mathcal{L}v_{\bot},v_{\bot}\rangle \geq C_1||v_{\bot}||^2,
\end{equation}
for some $C_1>0$.\\
\indent Assuming first that
$||u_0||=||\psi_{(\omega,A)}||=1$. Since $F$ is a conserved quantity, we obtain $||u(t)||^2=1$ for
all $t$. Hence, because \eqref{soltrav11} is invariant by translations,
we obtain $\langle
v,\psi_{(\omega,A)}\rangle\geq-C_2||v||_{X^{m_2}}^4$. Thus, there are
positive constants $C_3$ and $C_4$ such that
\begin{equation}\label{Pperp}
\langle\mathcal{L}v_{\bot},v_{\bot}\rangle\geq
C_3||v||^2-C_4||v||_{X^{\frac{m_2}{2}},\omega}^4,
\end{equation}
where $||f||_{X^{\frac{m_2}{2}},\omega}^2:=\int_0^{L_0}(\mathcal{M}^{1/2}f(x))^2dx+\omega\int_0^{L_0}f(x)^2dx$ is an equivalent norm in $X^{\frac{m_2}{2}}$. Next, from the Cauchy-Schwartz inequality,
\begin{equation}\label{Pperpar}
\langle\mathcal{L}v_{||},v_{\bot}\rangle\geq-C_5||v||_{X^{\frac{m_2}{2}},\omega}^3,
\end{equation}
 for some $C_5>0$. Therefore, \eqref{Pperp} and
 \eqref{Pperpar}, yield
\begin{equation}\label{est2}
\langle\mathcal{L}v,v\rangle\geq
C_6||v||_{X^{\frac{m_2}{2}},\omega}^2-C_7||v||_{X^{\frac{m_2}{2}},\omega}^3-C_8||v||_{X^{\frac{m_2}{2}},\omega}^4,
\end{equation}
where $C_i>0$, $i=6,7,8$. Finally, collecting results in $(\ref{deltaR})$ and $(\ref{est2})$ we have
\begin{equation}\label{finalest}
\Delta\mathcal{P}_{(\omega,A)}\geq D_1||v||_{X^{\frac{m_2}{2}},\omega}^2-D_2||v||_{X^{\frac{m_2}{2}},\omega}^3-D_3||v||_{X^{\frac{m_2}{2}},\omega}^4,
\end{equation}
for some $D_i>0$, $i=1,2,3$. The remainder of the proof can be established by using standard arguments. For details, we refer the reader to see \cite{bona1} (see also \cite{ABS} and \cite{weinstein1}). This argument proves that the orbit generated by $\psi_{(\omega,A)}(x-ct)$ is stable
relative to small perturbations which preserves the $L_{per}^2-$norm
of $\psi_{(\omega,A)}$. The general case (that for $\|u_0\|\neq
\|\psi_{(\omega,A)}\|$) follows from the continuous dependence of
the function $\psi_{(\omega,A)}$ with respect to the parameters $(\omega,A)$ jointly with the triangle inequality.
\begin{flushright}
${\square}$
\end{flushright}

\indent We can summarize the results obtained in this section with the following theorem:
\begin{teo}\label{teoestgeral}
Suppose that assumption $(H)$ occurs. The periodic wave $\psi$ is orbitally stable in the sense of the Definition $\ref{defi1}$ provided that $\frac{M(\psi)}{L_0}>\omega_0>0$.
\end{teo}
\begin{proof} The proof of this result follows immediately from Corollary $\ref{coro2}$ and Corollary $\ref{coroest1}$.
\end{proof}
\section{An Application} This section is devoted to apply the arguments in Section 2 to conclude the orbital stability of periodic waves for the Kawahara equation $(\ref{equakawa1})$. In  reference \cite{ncp}, the authors have constructed a smooth curve $\omega\in I\mapsto\psi_{\omega}\in H_{per}^n([0,L_0])$, $n\in\mathbb{N}$, of $L_0-$periodic waves and proving the orbital stability for specific values of $\omega\in I$ by using the arguments in \cite{andrade}. The method established in \cite{andrade} was an adaptation for the periodic case of the classical theory established in \cite{grillakis1}. In our present approach, we prove the orbital stability without assuming the restrictions on the wave speed $\omega$.\\
\indent Indeed, let us consider the ansatz (see \cite{parkes})
\begin{eqnarray}\label{sol}
\psi(x) = a &+& b\left(\mbox{dn}^2\left(\frac{2K}{L}x,k\right)-\frac{E}{K}\right)   \nonumber \\
&+& d\left(\mbox{dn}^4\left(\frac{2K}{L}x,k\right)-(2-k^2)\frac{2E}{3K}+\frac{1-k^2}{3}\right). \label{sol1}
\end{eqnarray}
Substituting this form into the equation
\begin{equation}\label{eq2}
\psi''''-\psi''+\omega\psi-\frac{1}{2}\psi^2+A=0
\end{equation}
 one has explicit periodic solutions provided that
\begin{eqnarray}\label{a}
a=\frac{1}{507L^4}((-k^4+k^2+1)302848K^4+14560L^2K^2(k^2-2) \nonumber \\ +43680L^2EK+L^4(-31+507\omega)),
\end{eqnarray}
\begin{equation}\label{bd}b=\frac{1120}{13L^4}((208k^2-416)K^2+L^2)K^2 \quad \mbox{and} \quad d=\frac{26880K^4}{L^4}.\end{equation}

Furthermore, $A$ is a complicated function which depends smoothly on the triple $(k,L,\omega)$ and it may be expressed by
\begin{equation}\label{valA}
A=f_1(k,L)+C\omega^2,
\end{equation}
where $C\in\mathbb{R}$.\\
\indent Moreover, we also need to consider a pair $(k,L)$ which solves the following (implicit) nonlinear equation
\be\label{kL}
\frac{89989120}{31}(k^2-2)\left(k^2-\frac{1}{2}\right)(k^2+1)K^6-\frac{908544}{31}L^2(k^4-k^2+1)K^4+L^6=0.
\ee
\indent A standard application of the implicit function theorem gives us the existence of two open intervals $I\subset(0,+\infty)$ and $J\subset(0,1)$ such that the function $k\in J\mapsto L(k)\in I$ is smooth. Therefore, for a fixed value of the modulus $k_0\in(0,1)$ one has a unique value $L_0>0$ such that $\psi$ is a smooth $L_0-$periodic solution related to the equation $(\ref{eq2})$ as required in the first part of assumption $(H)$ (important to mention that $\omega$ is a free parameter which does not depend on the pair $(k,L)$).\\
\indent With this arguments in hands, we need to establish the spectral property associated with the linearized operator
\begin{equation}\label{operakawa}
\mathcal{L}_{(\omega_0,A_0)}=\partial_x^4-\partial_x^2+\omega_0-\psi,\ \ \ \ \ \ \ \ \omega_0>0.
\end{equation}

\begin{prop}\label{propspec}Consider $L_0>0$ satisfying $(\ref{kL})$ and let $\omega_0>0$ be arbitrary but fixed. The operator $\mathcal{L}_{(\omega_0,A_0)}$ in (\ref{operakawa}) possesses exactly a unique negative
eigenvalue which is simple, and zero is a simple eigenvalue with
eigenfunction $\frac{d}{dx}\psi$.
\end{prop}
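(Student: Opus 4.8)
The plan is to obtain the spectrum of $\mathcal{L}_{(\omega_0,A_0)}$ in (\ref{operakawa}) by a continuation argument in the elliptic modulus $k$. First I would remark that, for a fixed pair $(k,L_0)$, the operator does not depend on $\omega_0$: by (\ref{a}) the constant $a$ contains the summand $\omega$ with coefficient one, whereas $b$ and $d$ in (\ref{bd}) are $\omega$-free, so $\partial_\omega\psi\equiv1$ and the combination $\omega_0-\psi$ evaluated at $\omega=\omega_0$ is independent of $\omega_0$. Hence the assertion is insensitive to $\omega_0$ and reduces to a statement about the one-parameter family generated by the smooth curve $k\mapsto L(k)$ determined by (\ref{kL}). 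Rescaling $x=L\xi$ onto a fixed interval I would write $\mathcal{L}_{(\omega_0,A_0)}=\frac1{L^4}\partial_\xi^4-\frac1{L^2}\partial_\xi^2-V$, where $V=\psi-\omega_0$ is a polynomial in $\dn^2(\,\cdot\,,k)$; since $L(k)$ is analytic and nonvanishing this is a self-adjoint holomorphic family of type (A) in the sense of \cite{kato1}, so its eigenvalues and eigenprojections are analytic in $k$.

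Next I would examine the degenerate endpoint $k\to0^+$, where $\dn\to1$ and the potential tends to a constant $V_{*}$; the operator then becomes constant-coefficient with purely Fourier spectrum $\mu_n=(2\pi n/L_{*})^4+(2\pi n/L_{*})^2-V_{*}$. One checks that the limiting form of (\ref{kL}) forces $V_{*}=(2\pi/L_{*})^4+(2\pi/L_{*})^2$, so that $\mu_0=-V_{*}<0$ is a simple negative eigenvalue, $\mu_1=0$ is a double eigenvalue spanned by $\cos(2\pi x/L_{*})$ and $\sin(2\pi x/L_{*})$, and $\mu_n>0$ for $n\ge2$. Because $\psi$ is even, $\mathcal{L}_{(\omega_0,A_0)}$ commutes with $x\mapsto-x$ and the problem decouples into even and odd parts. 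Differentiating (\ref{eq2}) in $x$ shows $\frac{d}{dx}\psi\in\ker\mathcal{L}_{(\omega_0,A_0)}$ for every $k$, and for small $k$ this kernel element is proportional to $\sin(2\pi x/L)$ at leading order; thus the odd branch issuing from $\mu_1$ stays pinned at the origin and carries the eigenfunction $\frac{d}{dx}\psi$, while the claimed simplicity of $0$ becomes equivalent to showing that the even branch issuing from $\mu_1$ leaves the origin for $k>0$ and never returns to it.

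I would then fix the negative-eigenvalue count in two stages. Locally near $k=0^+$ I would run a degenerate second-order perturbation argument: spatial parity decouples the two branches of $\mu_1$, the odd branch is pinned at $0$ by the exact kernel element $\frac{d}{dx}\psi$, and---since the potential and the period depend on $k$ only through $k^2$---the first-order term drops out, leaving the even branch as $\mu_1^{\mathrm e}(k)=c\,k^2+o(k^2)$; showing $c>0$ makes it ascend into the positive half-line, so that together with $\mu_0<0$ there is exactly one simple negative eigenvalue and a simple zero eigenvalue for small $k>0$. To propagate this to the prescribed modulus I would prove that $0$ stays a simple eigenvalue for every admissible $k$, i.e.\ that $\mathcal{L}_{(\omega_0,A_0)}\phi=0$ has no nontrivial even $L_0$-periodic solution; granting this, no eigenvalue can cross the origin and analyticity freezes the count at one. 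The principal obstacle is exactly this global control of the even kernel: unlike the second-order Lam\'e case, there is no Sturm oscillation theory for $\partial_x^4-\partial_x^2-V$ to rule out an even periodic null solution.

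To surmount it I would exploit the explicit finite-gap structure of $V$. From $\frac{d^2}{du^2}\dn=(2-k^2)\dn-2\dn^3$ one sees that $\mathcal{L}_{(\omega_0,A_0)}$ raises the degree as a polynomial in $\dn^2$ by at most two, mapping $\mathrm{span}\{1,\dn^2,\dn^4\}$ into $\mathrm{span}\{1,\dn^2,\dn^4,\dn^6,\dn^8\}$; imposing that the coefficients of $\dn^6$ and $\dn^8$ vanish should produce an explicit sign-definite even eigenfunction $\chi=c_0+c_1\dn^2+c_2\dn^4$ with a strictly negative eigenvalue, the companion odd zero-mode being $\frac{d}{dx}\psi\in\mathrm{span}\{(\dn^2)',(\dn^4)'\}$ with exactly two zeros per period. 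These closed-form band-edge eigenfunctions reflect the algebro-geometric nature of the dnoidal profile, which confines the periodic spectrum to a finite list of computable band edges; exhibiting that $\chi$ and $\frac{d}{dx}\psi$ are the two lowest such edges, with the remaining ones strictly positive for all admissible $k$, delivers both the simplicity of $0$ and the count. Since this exact operator was analyzed in \cite{ncp}, one may alternatively invoke that computation directly; I would nonetheless carry out the self-contained continuation and use the explicit eigenfunctions only to anchor the enumeration.
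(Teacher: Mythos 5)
Your reduction in $\omega_0$ is fine (it is essentially the same observation the paper makes via Galilean invariance), and a continuation argument in the modulus $k$ from the constant-coefficient limit $k\to0^+$ is a legitimate strategy in principle. But the proof has a genuine gap exactly where you flag it: the global statement that the even part of $\mathcal{L}_{(\omega_0,A_0)}$ has trivial kernel for every admissible $k$. Everything preceding it --- the endpoint spectrum (where the identity $V_{*}=(2\pi/L_{*})^4+(2\pi/L_{*})^2$ is asserted but not checked against (\ref{a}), (\ref{bd}) and the $k\to0$ limit of (\ref{kL})), the parity splitting, the pinned odd branch $\frac{d}{dx}\psi$, and the sign of the $k^2$-coefficient of the even branch (also not computed) --- only controls the eigenvalue count for small $k$. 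Without the global nondegeneracy, analyticity does not ``freeze the count,'' and the proposition remains unproved at the moduli actually selected by (\ref{kL}).

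The device you offer to close this gap does not work as stated. For the fourth-order operator $\partial_x^4-\partial_x^2+\omega_0-\psi$ there is no Sturm/oscillation theory, so even if you produced a sign-definite even eigenfunction $\chi=c_0+c_1\dn^2+c_2\dn^4$ with a negative eigenvalue, that would not identify it as the ground state nor bound the number of eigenvalues below zero; ``anchoring the enumeration'' by nodal counts is precisely what is available for second-order Lam\'e operators and unavailable here. Moreover the existence of such a closed-form eigenfunction is itself unverified: killing the $\dn^6$ and $\dn^8$ coefficients of $\mathcal{L}\chi$ gives two linear conditions on $(c_0,c_1,c_2)$, after which one must still check that the residual polynomial is proportional to $\chi$ --- an algebraic coincidence you neither prove nor make plausible for this two-parameter fourth-order family. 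The paper closes the count by an entirely different mechanism: it writes the Fourier coefficients of $\psi$ explicitly, $\hat\psi(n)\sim n\,\mathrm{csch}(n\pi K_0'/K_0)$ for $n\neq0$, shows via logarithmic concavity that (after adjusting the zero mode, which uses $\omega_0$ large together with the Galilean reduction) the sequence lies in the class $PF(2)$, and invokes Theorem 4.1 of \cite{AN}, which delivers in one stroke exactly one simple negative eigenvalue and a simple kernel spanned by $\frac{d}{dx}\psi$. To salvage your continuation scheme you would have to either import that positivity argument or give an independent proof that the even kernel is trivial for all admissible $k$; neither is in your write-up, and the fallback of ``invoking \cite{ncp}'' defers to a computation you have not reproduced.
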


\begin{proof}
 We prove the result by using Theorem 4.1 in \cite{AN}. First of all, we use the Galilean invariance associated to $(\ref{eq2})$ in order to prove that the spectral property in $(H)$ will be the same for all values of $\omega\in\mathbb{R}$ (the value of the integration constant $A$ is irrelevant in our spectral analysis). Therefore, it suffices to prove the result for a specific value of $\omega_0$. In fact, let $\alpha_0\in\mathbb{R}$ be arbitrary but fixed. By defining $\tilde{\psi}=\alpha_0+\psi$, where $\psi$ is solution of \eqref{eq2}, it follows that
$$(\omega_0+\alpha_0)\tilde{\psi}-\frac{1}{2}\tilde{\psi}^{2}-\tilde{\psi}''+\tilde{\psi}''''+\tilde{A_0}=0,$$
where $\tilde{A_0}=A_0-\omega_0\alpha_0-\frac{\alpha_0^2}{2}$. Therefore, $\tilde{\psi}$ solves a similar equation as in $(\ref{eq2})$ with wave speed $\omega_0+\alpha_0$. Thus, we obtain
$$\mathcal{L}_{(\omega_0,A_0)}=\frac{\partial^4}{\partial x^4}-\frac{\partial^2}{\partial x^2}+\omega_0-\psi=\frac{\partial^4}{\partial x^4}-\frac{\partial^2}{\partial x^2}+\omega_0+\alpha_0-\tilde{\psi}=:\mathcal{L}_{(\omega_0+\alpha_0,\tilde{A_0})}.$$
Last equality gives us the desired result.\\
\indent Next, we can write the solution (\ref{sol}) as a Fourier series of the form (see \cite{ayse})
	
$$
\phi(x) = a_0+\gamma\sum_{n=1}^{\infty}n\mbox{csch}\left(\frac{n\pi K_0'}{K_0}\right)\cos\left(\frac{2\pi n}{L_0}x\right), \nonumber
$$	
where $\gamma_0:=\left(\frac{b_0\pi^2}{K_0^2}+\frac{d_0\pi^2}{k^2K_0^2}\left(\frac{4-2k_0^2}{3}+\frac{n^2\pi^2}{6K_0}\right)\right)$, $K_0=K(k_0)$ and $K'(k_0)=K(\sqrt{1-k_0^2})$. In addition, according with $(\ref{a})$ and $(\ref{bd})$ we can write $a_0=a(k_0,L_0,\omega_0)$, $b_0=b(k_0,L_0)$, $d_0=d(k_0,L_0)$. Therefore, the Fourier coefficients are
\begin{equation}\label{coefficients}
\hat{\psi}(n)=\left \{
\begin{array}{cc}
a_0, & n=0 \\
\sigma(n), & n\neq0 \\
\end{array}
\right.
\end{equation}
where
$\sigma(n)=\frac{\gamma_0}{2} n\mbox{csch}\left(\frac{n\pi K_0'}{K_0}\right)$.

 By defining $g(x)=\frac{\gamma_0}{2} x\mbox{csch}\left(\frac{x\pi K_0'}{K_0}\right)$, $x\in\mathbb{R}$, we see that $g$ is a smooth logarithmically concave function. Thus, from Lemma 4.1 in \cite{AN} one has that $g$ belongs to the $PF(2)-$continuous class and thus, since $a_0>g(0)$, for all $\omega_0>0$ large enough, we can redefine the $PF(2)$-continuous function $g$
by a differentiable function $s:\mathbb{R}\rightarrow\mathbb{R}$
such that $s(0)= a_0$,
$s(x)=g(x)$ in $(-\infty,-1]\cup[1,+\infty)$ such that $s\in PF(2)$ in the continuous case. Letting $s(n)=\hat{\psi}(n)$, $n\in\mathbb{Z}$, one has that $(\widehat{\psi}(n))_{n\in\mathbb{Z}}$ belongs to $PF(2)$, for all $\omega_0>0$ large enough. Therefore, Theorem 4.1 in \cite{AN} gives us the spectral properties required in assumption $(H)$.

\end{proof}

\indent Next, we need to analyze the difference $\frac{M(\psi)}{L_0}-\omega_0$ to conclude the orbital stability of periodic waves for the model $(\ref{equakawa1})$. Indeed, since $M(\psi)=a_0L$, we can deduce from $(\ref{a})$ that $a_0-\omega_0$ just depends on the pair $(k_0,L_0)\in J\times I$. Therefore, one has
$$
\begin{array}{llll}\frac{M(\psi)}{L_0}-\omega_0&=&a_0-\omega_0\\\\
&=&\displaystyle \frac{302848(-k_0^4+k_0^2+1)K_0^4+14560L^2K_0^2(k_0^2-2)+43680L^2E_0K_0-31L^4}{507L^4}\\\\
&=&\displaystyle\frac{1}{507L^4}p(k_0,L_0^2),
\end{array}
$$
where $E_0=E(k_0)$. By taking $L_1=L_0^2$ we can rewrite function $p(k_0,L_0^2)$ as $p(k_0,L_1)$. This change of variables can be used to simplify the implicit relation in $k_0$ and $L_0$ in \eqref{kL} as
\be\label{kL1}
\frac{89989120}{31}(k_0^2-2)\left(k_0^2-\frac{1}{2}\right)(k_0^2+1)K_0^6-\frac{908544}{31}L_1(k_0^4-k_0^2+1)K_0^4+L_1^3=0.
\ee

Using \textit{Maple 16}, we can solve algebraically the equation in \eqref{kL1} in terms of the modulus in order of obtaining the positive function
$$ L_1(k)=\frac{104}{31}\frac{r(k_0)}{q(k_0)},
$$
where
$r(k_0)$ and $q(k_0)$ are complicated expressions containing several powers of $k_0$. Since $\frac{M(\psi)}{L_0}-\omega_0=\frac{1}{507L^4}p(k_0,L_1)$,
we can plot the graph of $p(k_0,L_1)$ in order to understand its behaviour in terms of the modulus. The figure below shows that there are values of the pair $k_0$ such that the difference $p(k_0,L_1)$ is positive as required in our stability approach.
\begin{figure}[h!]
\includegraphics[scale=0.30]{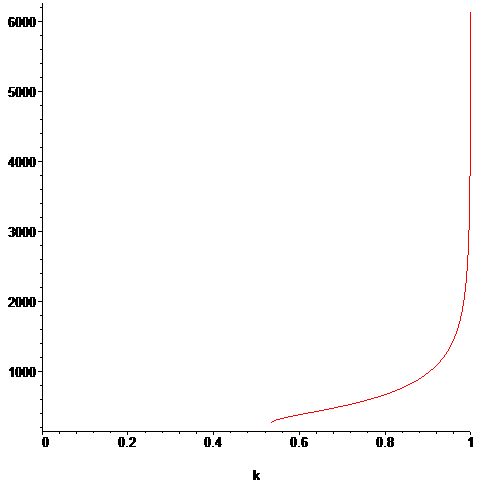}\quad \quad \quad \quad \includegraphics[scale=0.30]{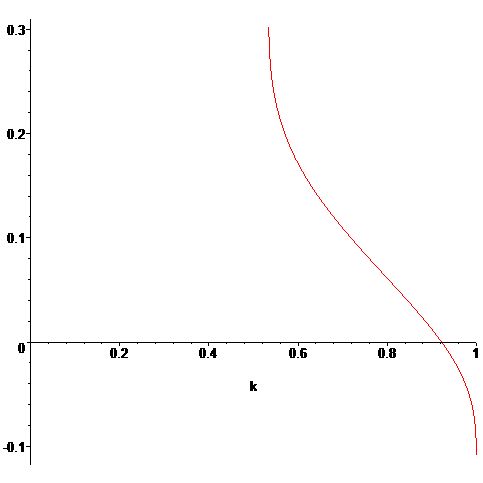}
\caption{Left: The graph of the function $L_1(k_0)$. Right: The graph of $p(k_0,L_1)$.}
\label{figure4}
\end{figure}
Important to mention that our results are agreeing with those ones in \cite{ncp} since to conclude the stability in refereed paper, it makes necessary to analyse the behaviour of the difference $\frac{M(\psi)}{L_0}-\omega_0$. The main problem in \cite{ncp} is that we need, in order to use an adaptation of the arguments in \cite{grillakis1}, to consider small values of $\omega_0>0$ to determine a positiveness of a certain quantity. This fact is not necessary and our stability result becomes more complete. Thus, collecting all results above we are enable to enunciate the following result.

\begin{teo}
Consider $L_0>0$ satisfying $(\ref{kL})$ and let $\omega_0>0$ be arbitrary but fixed. The traveling wave $\psi(x-\omega_0t)$ in (\ref{sol1}) is orbitally stable in $H_{per}^2([0,L_0])$ by the periodic flow of the equation (\ref{equakawa1}) provided that $\frac{M(\psi)}{L_0}>\omega_0$.
\end{teo}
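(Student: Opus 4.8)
The plan is to realize the Kawahara equation (\ref{equakawa1}) as the special case of (\ref{equakawa}) with $\mathcal{M}=\partial_x^4-\partial_x^2$, whose Fourier symbol is $\theta(\kappa)=\kappa^4+\kappa^2$; this satisfies (\ref{alpha}) with $m_2=4$, so the energy space is $X^{\frac{m_2}{2}}=H_{per}^2([0,L_0])$, exactly the space in the statement. First I would record that the three functionals $E$, $F$, $M$ in (\ref{conser1})--(\ref{conser3}) are conserved along the Kawahara flow and that (\ref{equakawa1}) is globally well posed in $H_{per}^2$, which is the standing hypothesis of Section 2. Next I would verify assumption $(H)$ for the profile (\ref{sol1}): Proposition \ref{propspec} already gives that $\mathcal{L}_{(\omega_0,A_0)}$ has a single, simple negative eigenvalue and that $0$ is simple with eigenfunction $\psi'$, valid for every $\omega_0>0$ through the Galilean-reduction argument carried out there. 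The positivity of $\psi$ required by $(H)$ is not restrictive: since the substitution $\tilde\psi=\alpha_0+\psi$ shifts the wave speed to $\omega_0+\alpha_0$ and leaves the quantity $\frac{M(\psi)}{L_0}-\omega_0$ unchanged, I can always choose $\alpha_0$ making the profile positive without disturbing the hypothesis $\frac{M(\psi)}{L_0}>\omega_0$.

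With $(H)$ in hand, Theorem \ref{teoexist} produces the smooth surface $(\omega,A)\mapsto\psi_{(\omega,A)}$ of even $L_0$-periodic solutions and Proposition \ref{teo5} propagates the spectral picture to a neighborhood $\mathcal{O}$ of $(\omega_0,A_0)$. I would then feed the hypothesis $\frac{M(\psi)}{L_0}>\omega_0>0$ into Corollary \ref{coro1} to obtain a pair $(x_0,y_0)$ with $P(x_0,y_0)>0$, and into Proposition \ref{propKpos} to manufacture $\Phi=x_0\eta+y_0\beta\in X^{m_2}$ satisfying $\langle\mathcal{L}_0\Phi,\varphi\rangle=0$ for all $\varphi\in\Upsilon_0$ together with $\mathcal{I}=\langle\mathcal{L}_0\Phi,\Phi\rangle<0$. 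This is the negative-directional information needed to run the coercivity lemmas.

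At this point the argument bifurcates on the sign of $M_\omega(\psi)$. If $M_\omega(\psi)<0$, Corollary \ref{coro2} shows $\det(\mathcal{D})\neq0$, and since $\mathcal{I}<0$ the hypotheses of Theorem \ref{teo2} are met verbatim, giving orbital stability in $H_{per}^2$. If instead $M_\omega(\psi)\geq0$ the matrix $\mathcal{D}$ may be singular, so I would switch to the alternative route at the close of Section 2: here $F_\omega(\psi)=\omega_0M_\omega(\psi)+M(\psi)>0$, and combining Lemma \ref{weinstein} (via the sign condition (\ref{weinsteincond}), equivalently (\ref{calc1})) with Lemma \ref{lema2} yields the coercive lower bound (\ref{propy1}) for $\mathcal{L}$ restricted to the codimension-two subspace; inserting this into the expansion (\ref{deltaR}) produces (\ref{finalest}) and the Lyapunov/continuation argument closes the stability proof. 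Finally, to see that the hypothesis is non-vacuous I would invoke the computation that $\frac{M(\psi)}{L_0}-\omega_0=a_0-\omega_0=\frac{1}{507L^4}p(k_0,L_1)$ depends only on $(k_0,L_0)$ constrained by (\ref{kL}), whose graph (Figure \ref{figure4}) is positive on an explicit range of moduli. The main obstacle is the degenerate case $M_\omega(\psi)\geq0$: one cannot appeal to Theorem \ref{teo2} because $\det(\mathcal{D})$ may vanish, and the coercivity must instead be extracted from the Weinstein-type Lemma \ref{weinstein}, which requires the auxiliary positivity $\langle\mathcal{L}^{-1}\psi,\psi\rangle<0$ and the one-signedness of the ground state supplied by the Krein--Rutman theorem.
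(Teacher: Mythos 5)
Your proposal is correct and follows essentially the same route as the paper: the paper proves the theorem by assembling Proposition \ref{propspec} (spectral property, extended to all $\omega_0>0$ by the Galilean shift), Theorem \ref{teoexist} and Proposition \ref{teo5}, then Corollary \ref{coro1}, Proposition \ref{propKpos} and Corollary \ref{coro2} feeding into Theorem \ref{teo2} when $M_\omega(\psi)<0$, and the Weinstein-type argument via Lemmas \ref{weinstein} and \ref{lema2} when $M_\omega(\psi)\geq0$. Your observation that the Galilean shift leaves $\frac{M(\psi)}{L_0}-\omega_0$ invariant, so positivity of the profile can be arranged for free, is exactly the role the paper assigns to that symmetry.
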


\begin{obs}
Global solutions in the energy space $H_{per}^2([0,L_0])$ as well as existence of convenient conserved quantities as in $(\ref{conser1})$, $(\ref{conser2})$ and $(\ref{conser2})$ with $\mathcal{M}=\partial_x^4-\partial_x^2$ associated with the equation $(\ref{equakawa1})$ can be found in reference \cite{tkato}.

\end{obs}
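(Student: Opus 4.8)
The statement bundles two separate facts about $(\ref{equakawa1})$: that the three functionals in $(\ref{conser1})$--$(\ref{conser3})$ are invariants of the flow, and that the Cauchy problem is globally well posed in the energy space $H_{per}^2([0,L_0])$ (which is $X^{\frac{m_2}{2}}$ for $m_2=4$). The plan is to verify the conservation laws by a direct computation and then to produce global solutions by upgrading a local existence theory at the energy regularity with the a priori bounds that these invariants supply.

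For the invariants I would differentiate each functional along a smooth solution and integrate by parts, discarding every boundary term by $L_0$-periodicity. Since each summand of $(\ref{equakawa1})$ is a perfect $x$-derivative, $\frac{d}{dt}M(u)=\int_0^{L_0}u_t\,dx=0$ at once. For $F$ one writes $\frac{d}{dt}F(u)=\langle u,u_t\rangle$ and checks that $\langle u,uu_x\rangle$, $\langle u,u_{xxx}\rangle$ and $\langle u,u_{xxxxx}\rangle$ each vanish after integration by parts. For $E$ the efficient route is to record the skew-gradient (Hamiltonian) structure of the model: $(\ref{equakawa1})$ reads $u_t=\partial_x\mathcal{G}(u)$ with $\mathcal{G}(u)=\mathcal{M}u-\tfrac12 u^2$ the variational gradient of $E$ (the normalization of the cubic term in $(\ref{conser1})$ being the one for which this identity closes), so that $\frac{d}{dt}E(u)=\langle \mathcal{G}(u),\partial_x\mathcal{G}(u)\rangle=\tfrac12\int_0^{L_0}\partial_x\big(\mathcal{G}(u)^2\big)\,dx=0$ by periodicity. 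These identities are first established for smooth data and then extended to $H_{per}^2$ data by the continuous dependence coming from the local theory.

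For global well-posedness the first task is a local existence theory in $H_{per}^2$. The linear propagator generated by $\partial_x^5-\partial_x^3$ has purely imaginary symbol $i(\kappa^5+\kappa^3)$ and is therefore unitary on every $H_{per}^s$; the genuine difficulty is that the nonlinearity $uu_x=\tfrac12\partial_x(u^2)$ costs one derivative, which must be recovered from the strong dispersive smoothing of the fifth-order term. I would realize this recovery through Bourgain $X^{s,b}$ spaces adapted to the phase $\kappa^5+\kappa^3$, the heart of the matter being a bilinear estimate of the form $\|\partial_x(uv)\|_{X^{s,b-1}}\lesssim\|u\|_{X^{s,b}}\|v\|_{X^{s,b}}$, from which a contraction mapping argument yields local solutions on a time interval depending only on $\|u_0\|_{H^2}$. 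Once local solutions exist, they are globalized by the conservation laws: $F$ pins down $\|u(t)\|_{L^2}$, and then, writing the quadratic part of $E$ as $\tfrac12(\|u_{xx}\|^2+\|u_x\|^2)$, the Gagliardo--Nirenberg inequality $\int|u|^3\lesssim\|u_{xx}\|^{1/4}\|u\|_{L^2}^{11/4}$ (up to lower-order terms controlled by $M$ and $F$) shows the cubic part of $(\ref{conser1})$ to be subcritical with respect to $\|u_{xx}\|^2$. Young's inequality then absorbs it, producing a time-uniform bound on $\|u(t)\|_{H^2}$, which rules out finite-time blow-up and extends the local solution to all $t\in\R$.

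The main obstacle is precisely the local theory at the energy regularity. For a fifth-order equation with a derivative nonlinearity in the periodic setting, the bilinear estimate hinges on a careful analysis of the resonance function associated with $\kappa^5+\kappa^3$, including the problematic low-frequency and near-resonant interactions; this is the analytically delicate step and is exactly the content one would extract from \cite{tkato}. By contrast, the conservation identities and the Gagliardo--Nirenberg globalization are routine once the local well-posedness is secured.
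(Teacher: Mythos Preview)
The paper does not prove this statement: it is a \emph{remark} that simply defers both the global well-posedness in $H_{per}^2([0,L_0])$ and the conservation of $E$, $F$, $M$ to the reference \cite{tkato}. There is no argument to compare against.

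Your proposal goes well beyond what the paper supplies and sketches the actual content one would find in \cite{tkato}. The outline is sound: the conservation of $M$ and $F$ via integration by parts is routine; the Hamiltonian formulation $u_t=\partial_x E'(u)$ gives $E$ conserved cleanly; and the globalization step via Gagliardo--Nirenberg is the standard move once $\|u\|_{L^2}$ is pinned down (the cubic term is indeed subcritical relative to $\|u_{xx}\|_{L^2}^2$). You also correctly identify the only nontrivial ingredient as the local theory at $H^2$ regularity, realized through $X^{s,b}$ spaces adapted to the dispersion $\kappa^5+\kappa^3$ and a bilinear estimate handling the derivative loss in $uu_x$; this is precisely what \cite{tkato} provides (in fact at regularities well below $s=2$, so $H^2$ is comfortably covered). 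Since the paper itself offers nothing beyond the citation, your sketch is strictly more informative than the source and is correct as far as it goes; the bilinear estimate is appropriately flagged as the point where one must invoke the reference rather than improvise.
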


\section*{Acknowledgement}

F. N. is partially supported by CNPq/Brazil.

\end{document}